\newtheorem{theorem}{Theorem}[section]
\newtheorem{corollary}[theorem]{Corollary}
\theoremstyle{definition}
\newtheorem{definition}[theorem]{Definition}
\newtheorem{example}[theorem]{Example}
\theoremstyle{remark}
\newtheorem{remark}[theorem]{Remark}
\numberwithin{equation}{section}
  \DeclareMathOperator{\spe}{sp}
\begin{document}
\setcounter{page}{1}

\title[On Pompeiu--\v{C}eby\v{s}ev type inequalities]
{On Pompeiu--\v{C}eby\v{s}ev type inequalities for positive linear
	maps of selfadjoint operators in inner product spaces}

\author[M. W.  Alomari]{Mohammad W. Alomari}

\address{Department of Mathematics, Faculty of Science and
Information Technology, Irbid National University, P.O. Box 2600,
Irbid, P.C. 21110, Jordan.}
\email{\textcolor[rgb]{0.00,0.00,0.84}{mwomath@gmail.com}}


\subjclass[2010]{Primary 47A63; Secondary  47A99.}

\keywords{Hilbert space, Selfadjoint operators,
$h$-Synchronization.}

\date{Received: xxxxxx; Revised: yyyyyy; Accepted: zzzzzz.}

\begin{abstract}
In this work, generalizations of some inequalities for continuous
$h$-synchronous ($h$-asynchronous) functions of linear bounded selfadjoint 
operators    under positive linear maps in Hilbert spaces are proved.
\end{abstract}

\maketitle

\section{Introduction}

Let $\mathcal{B}\left( H\right) $ be the Banach algebra of all
bounded linear operators defined on a complex Hilbert space
$\left( H;\left\langle \cdot ,\cdot \right\rangle \right)$  with
the identity operator  $1_H$ in $\mathcal{B}\left( H\right) $. Let
$A\in \mathcal{B}\left( H\right) $ be a selfadjoint linear
operator on $\left( H;\left\langle \cdot ,\cdot \right\rangle
\right)$. Let $C\left(\spe\left(A\right)\right)$ be the set of all
continuous functions defined on the spectrum of $A$
$\left(\spe\left(A\right)\right)$ and let $C^*\left(A\right)$ be
the $C^*$-algebra generated by $A$ and the identity operator
$1_H$.

Let us define the map $\mathcal{G}:
C\left(\spe\left(A\right)\right) \to C^*\left(A\right)$ with the
following properties (\cite{TF}, p.3):
\begin{enumerate}
\item $\mathcal{G}\left(\alpha f + \beta g\right) = \alpha
\mathcal{G}\left(f\right)+\beta \mathcal{G}\left(g\right)$, for
all scalars $\alpha, \beta$.

\item $\mathcal{G}\left(fg\right) = \mathcal{G}\left(f\right)
\mathcal{G}\left(g\right)$ and
$\mathcal{G}\left(\overline{f}\right)=\mathcal{G}\left(f\right)^*$;
where $\overline{f}$ denotes to the conjugate of $f$ and
$\mathcal{G}\left(f\right)^*$ denotes to the Hermitian of
$\mathcal{G}\left(f\right)$.

\item $\left\|\mathcal{G}\left(f\right)\right\|=\left\|f \right\|
= \mathop {\sup }\limits_{t \in \spe\left(A\right)} \left|
{f\left( t \right)} \right| $.

\item $\mathcal{G}\left( {f_0 } \right) = 1_H$ and
$\mathcal{G}\left( {f_1 } \right) = A$, where
$f_0\left(t\right)=1$ and $f_1\left(t\right)=t$ for all $t \in
\spe\left(A\right)$.
\end{enumerate}
Accordingly,  we define the continuous functional calculus for a
selfadjoint operator $A$ by
\begin{align*}
f\left(A\right) = \mathcal{G}\left(f\right)  \text{for all} \,f\in
C\left(\spe\left(A\right)\right).
\end{align*}
If both $f$ and $g$ are real valued functions on $\spe(A)$ then
the following important property holds:
\begin{align}
f\left( t \right) \ge g\left( t \right)  \,\text{for all} \, \,t
\in \spe\left( A \right) \,\,\text{implies}\,\, f\left( A \right)
\ge g\left( A \right), \label{eq1.2}
\end{align}
in the operator order of $\mathcal{B}(H)$.

A linear map  is defined to be   $\Phi:\mathcal{B}\left(\mathcal{H} \right)\to \mathcal{B}\left(\mathcal{K} \right)$ which preserves additivity and
homogeneity, i.e.,  $\Phi \left(\lambda_1 A +\lambda_2 B \right)=
\lambda_1\Phi \left( A  \right)+ \lambda_2\Phi \left( B \right)$
for any $\lambda_1,\lambda_2 \in \mathbb{C}$  and $A, B \in \mathcal{B}\left(\mathcal{H} \right)$. The linear map is positive $\Phi:\mathcal{B}\left(\mathcal{H} \right)\to \mathcal{B}\left(\mathcal{K} \right)$ if it preserves the operator order, i.e., if $A\in \mathcal{B}^+\left(\mathcal{H} \right)$ then $\Phi\left(A\right)\in \mathcal{B}^+\left(\mathcal{K} \right)$, and in this case we write
$\textfrak{B} [\mathcal{B}\left(\mathcal{H} \right),\mathcal{B}\left(\mathcal{K} \right)] $. Obviously, a positive linear map $\Phi$ preserves the order relation, namely
$A\le B \Longrightarrow \Phi\left(A\right)\le \Phi\left(B\right)$ and preserves the adjoint operation $\Phi\left(A^*\right)=\Phi\left(A\right)^*$.
Moreover, $\Phi$ is said to be  normalized (unital) if it preserves the identity operator, i.e. $\Phi\left(1_{\mathcal{H}}\right) = 1_{\mathcal{K}}$, in this case we write
$\textfrak{B}_n [\mathcal{B}\left(\mathcal{H}
\right),\mathcal{B}\left(\mathcal{K} \right)] $.

In \cite{MA} and formally in \cite{MA1}, the author  of this paper generalized the concept
of monotonicity   as follows:
\begin{definition}
	A real valued function $f$ defined on $\left[a,b\right]$ is said
	to be increasing (decreasing) with respect to a positive function
	$h:[a,b]\to \mathbb{R}_+$ or simply $h$-increasing
	($h$-decreasing) if and only if
	\begin{align*}
	h\left( x \right)f\left( t \right) - h\left( t \right)f\left( x
	\right) \ge (\le)\,\, 0,
	\end{align*}
	whenever $t \ge  x$  for every $x,t \in [a,b]$. In special case if
	$h(x)=1$ we refer to the original monotonicity. Accordingly, for
	$0<a<b$ we say that $f$ is $t^r$-increasing ($t^r$-decreasing) for
	$r\in \mathbb{R} $ if and only if
	\begin{align*}
	x \le t \Longrightarrow x^r f\left( t \right)  - t^r f\left( x
	\right) \ge (\le)\,\, 0
	\end{align*}
	for every $x,t \in [a,b]$.
\end{definition}
\begin{example}
	Let $0<a<b$ and define $f :\left[a,b\right]\to \mathbb{R}$  given
	by
	\begin{enumerate}
		\item $f(s)=1$, then $f$ is $t^r$-decreasing for all $r>0$ and
		$t^r$-increasing for all $r<0$.
		
		\item $f(s)=s$, then $f$ is $t^r$-decreasing for all $r>1$ and
		$t^r$-increasing for all $r<1$.
		
		\item $f(s)=s^{-1}$, then $f$ is $t^r$-decreasing for all $r>-1$
		and $t^r$-increasing for all $r<-1$.
		
	\end{enumerate}
\end{example}

\begin{remark}
Every $h$-increasing function is increasing. The
	converse need not be true. For more details see \cite{MA1}.
\end{remark}
 
The concept of synchronization  has a wide range of usage in
several areas of mathematics. Simply, two functions
$f,g:\left[a,b\right]\to \mathbb{R}$ are called synchronous
(asynchronous)  if and only if the inequality
\begin{align*}
\left( { f\left( t \right) - f\left( x \right)} \right)\left( {
	g\left( t \right) -  g\left( x\right)} \right) \ge (\le)\,\, 0,
\end{align*}
holds  for all $x,t\in \left[a,b\right]$.

In \cite{MA1}, Alomari generalized the concept of synchronization     of functions of real variables. Indeed, we have
 \begin{definition}
	\label{def2}The real valued functions $f,g:\left[a,b\right]\to
	\mathbb{R}$ are called   synchronous (asynchronous) with respect
	to a non-negative function $h:[a,b]\to \mathbb{R}_+$ or simply
	$h$-synchronous ($h$-asynchronous)  if and only if
	\begin{align}
	\label{h-syn}\left( {h\left( y \right)f\left( x \right) - h\left(
		x \right)f\left( y \right)} \right)\left( {h\left( y
		\right)g\left( x \right) - h\left( x \right)g\left( y \right)}
	\right) \ge (\le)\,\, 0
	\end{align}
	for all $x,y \in \left[a,b\right]$.
	
	In other words if both $f$ and $g$ are either $h$-increasing or
	$h$-decreasing then $$\left( {h\left( y \right)f\left( x \right) -
		h\left( x \right)f\left( y \right)} \right)\left( {h\left( y
		\right)g\left( x \right) - h\left( x \right)g\left( y \right)}
	\right) \ge0.$$ While, if one of the function is $h$-increasing
	and the other is $h$-decreasing then $$\left( {h\left( y
		\right)f\left( x \right) - h\left( x \right)f\left( y \right)}
	\right)\left( {h\left( y \right)g\left( x \right) - h\left( x
		\right)g\left( y \right)} \right) \le0.$$

	In special case if $h(x)=1$ we refer to the original
	synchronization. Accordingly, for $0<a<b$ we say that $f$ and $g$
	are $t^r$-synchronous ($t^r$-asynchronous) for $r\in \mathbb{R} $
	if and only if
	\begin{align*}
	\left(x^r f\left( t \right)  - t^r f\left(
	x \right) \right) \left(x^r g\left( t \right)  - t^r g\left( x
	\right) \right)\ge (\le)\,\, 0
	\end{align*}
	for every $x,t \in [a,b]$.
\end{definition}

\begin{remark}
	In Definition \eqref{def2}, if $f=g$ then $f$ and $g$   are always
	$h$-synchronous regardless of $h$-monotonicity of $f$ (or $g$). In
	other words, a function $f$ is always $h$-synchronous with itself.
\end{remark}

\begin{example}
	Let $0<a<b$ and define $f,g :\left[a,b\right]\to \mathbb{R}$ given
	by
	\begin{enumerate}
		\item $f(s)=1=g(s)$, then $f$ and $g$ are $t^r$-synchronous for
		all $r\in \mathbb{R}$.
		
		\item $f(s)=1$ and $g(s)=s$, then $f$ is $t^r$-synchronous
		for all $r \in \left( { - \infty ,0} \right) \cup \left( {1,\infty } \right)$ and
		$t^r$-asynchronous for all
		$0<r<1$.
		
		\item  $f(s)=1$ and   $g(s)=s^{-1}$, then $f$ is $t^r$-synchronous
		for all $r \in \left( { - \infty ,-1} \right) \cup \left(
		{0,\infty } \right)$ and $t^r$-asynchronous for all $-1<r<0$.
		
		\item $f(s)=s$ and   $g(s)=s^{-1}$, then $f$ is $t^r$-synchronous
		for all $r \in \left( { - \infty ,-1} \right) \cup \left(
		{1,\infty } \right)$ and $t^r$-asynchronous for all $-1<r<1$.
	\end{enumerate}
\end{example}

In \cite{SD1}, Dragomir studied the \v{C}eby\v{s}ev functional
\begin{align}
\label{cebysev} C\left(f,g;A,x\right):= \left\langle {f\left( A
	\right)g\left( A \right)x,x} \right\rangle - \left\langle {
	g\left( A \right)x,x} \right\rangle \left\langle { f\left( A
	\right)x,x} \right\rangle,
\end{align}
for any selfadjoint operator $A\in \mathcal{B}(H)$ and $x\in H$
with $\|x\|=1$.

In  \cite{SD1}, proved  the following result  concerning continuous
synchronous (asynchronous) functions of selfadjoint linear
operators in Hilbert spaces.
\begin{theorem}
\label{thm1.1}Let $A$ be a selfadjoint operator with
$\spe\left(A\right)\subset \left[\gamma,\Gamma\right]$  for some
real numbers $\gamma,\Gamma$ with $\gamma<\Gamma$. If $f,g: \left[
{\gamma,\Gamma} \right]\to \mathbb{R}$ are continuous and
synchronous (asynchronous) on $\left[ {\gamma,\Gamma} \right]$,
then
\begin{align}
\label{eq1.3} \left\langle {f\left( A \right)g\left( A \right)x,x}
\right\rangle \ge (\le) \left\langle { g\left( A \right)x,x}
\right\rangle \left\langle { f\left( A \right)x,x} \right\rangle
\end{align}
for any $x\in H$ with $\|x\|=1$.
\end{theorem}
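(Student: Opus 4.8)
The plan is to deduce \eqref{eq1.3} from the order-preserving property \eqref{eq1.2} of the continuous functional calculus, applied \emph{twice}, which in effect reduces the operator statement to the classical integral \v{C}eby\v{s}ev inequality for a probability measure. I would first treat the synchronous case and then obtain the asynchronous one by the substitution $g\mapsto -g$: if $f,g$ are asynchronous then $f,-g$ are synchronous, and \eqref{eq1.3} for the pair $(f,-g)$ is exactly \eqref{eq1.3} for $(f,g)$ with the reversed inequality sign.

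Assume $f,g$ are synchronous. First I would expand the defining inequality: for all $t,s\in[\gamma,\Gamma]$,
\begin{align*}
f(t)g(t)-g(s)f(t)-f(s)g(t)+f(s)g(s)\ge 0 .
\end{align*}
Fixing $s$ and viewing the left-hand side as a continuous real function of $t$ on $\spe(A)\subseteq[\gamma,\Gamma]$, property \eqref{eq1.2} yields the operator inequality
\begin{align*}
f(A)g(A)-g(s)f(A)-f(s)g(A)+f(s)g(s)1_H\ge 0 ,
\end{align*}
where I note that $f(A)$ and $g(A)$ are commuting selfadjoint operators, so $f(A)g(A)$ is selfadjoint. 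Pairing with a unit vector $x\in H$ gives, for every $s\in\spe(A)$,
\begin{align*}
f(s)g(s)-\langle g(A)x,x\rangle f(s)-\langle f(A)x,x\rangle g(s)+\langle f(A)g(A)x,x\rangle\ge 0 .
\end{align*}

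Now the three inner products are fixed real numbers, so the last display is an honest one-variable inequality in $s$; applying \eqref{eq1.2} a second time (this time in the variable $s$) and pairing the resulting operator inequality again with $x$ produces
\begin{align*}
2\langle f(A)g(A)x,x\rangle-2\langle f(A)x,x\rangle\langle g(A)x,x\rangle\ge 0 ,
\end{align*}
which is precisely \eqref{eq1.3}. Equivalently, the whole argument can be phrased through the spectral measure $\mu=\langle E(\cdot)x,x\rangle$ of $A$, which is a probability measure since $\|x\|=1$; then \eqref{eq1.3} is the statement $\int fg\,d\mu\ge\int f\,d\mu\int g\,d\mu$, obtained by integrating $\bigl(f(t)-f(s)\bigr)\bigl(g(t)-g(s)\bigr)\ge 0$ against $d\mu(t)\,d\mu(s)$ and expanding.

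The only delicate point, and the one I would be careful to justify, is the second invocation of \eqref{eq1.2}: after pairing with $x$ the coefficients have become scalars, so that $s\mapsto f(s)g(s)-\langle g(A)x,x\rangle f(s)-\langle f(A)x,x\rangle g(s)+\langle f(A)g(A)x,x\rangle$ is a genuine continuous function on $\spe(A)$ to which the functional calculus legitimately applies. Once this is granted the remainder is routine bookkeeping, and the asynchronous case follows immediately from $g\mapsto -g$.
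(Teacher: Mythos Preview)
Your argument is correct and is exactly the standard two-step functional-calculus technique. Note, however, that the paper does not actually prove Theorem~\ref{thm1.1}; it is quoted from \cite{SD1} as background. That said, your approach coincides with the method the paper employs in proving its own generalization, Theorem~\ref{thm2.1}: expand the $h$-synchronicity inequality, apply property~\eqref{eq1.2} in one variable with the other fixed, pass through the positive linear map (in your case simply pair with $x$), then repeat in the second variable. Your proof is the $h\equiv 1$, $\phi=\varphi=\mathrm{id}$, $B=A$, $y=x$ specialization of that argument, and your handling of the asynchronous case via $g\mapsto -g$ is the same observation the paper subsumes under ``the reverse case follows trivially.''
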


In \cite{MA1}, Alomari generalized Theorem \ref{thm1.1} for continuous
$h$-synchronous ($h$-asynchronous) functions of selfadjoint linear
operators in Hilbert spaces by introduciing the  Pompeiu--\v{C}eby\v{s}ev functional
such as:
\begin{multline}
\label{P} \mathcal{P}\left(f,g,h;A,x\right):= \left\langle {h^2
	\left( A \right)x,x} \right\rangle \left\langle {f\left( A
	\right)g\left( A \right)x,x} \right\rangle
\\
-\left\langle {h\left( A \right)g\left( A \right)x,x}
\right\rangle \left\langle {h\left( A \right)f\left( A \right)x,x}
\right\rangle
\end{multline}
for $x\in H$ with $\|x\|=1$. This naturally, generalizes the
\v{C}eby\v{s}ev functional \eqref{cebysev}.

Moreover, he proved the following  essential result:
 \begin{theorem}
 	\label{thm1.2}Let $A$ be a selfadjoint operator with
 	$\spe\left(A\right)\subset \left[\gamma,\Gamma\right]$  for some
 	real numbers $\gamma,\Gamma$ with $\gamma<\Gamma$. Let
 	$h:\left[\gamma,\Gamma\right]\to \mathbb{R}_+$ be a non-negative
 	and continuous function. If $f,g:
 	\left[ {\gamma,\Gamma} \right]\to \mathbb{R}$ are continuous and
 	both $f$ and $g$ are $h$-synchronous ($h$-asynchronous) on $\left[
 	{\gamma,\Gamma} \right]$, then
 	\begin{align}
 	\label{eq1.4} \left\langle {h^2 \left( A \right)x,x} \right\rangle
 	\left\langle {f\left( A \right)g\left( A \right)x,x} \right\rangle
 	\ge (\le) \left\langle {h\left( A \right)g\left( A \right)x,x}
 	\right\rangle \left\langle {h\left( A \right)f\left( A \right)x,x}
 	\right\rangle
 	\end{align}
 	for any $x\in H$ with $\|x\|=1$.
 \end{theorem}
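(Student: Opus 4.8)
The plan is to follow the scheme of Dragomir's proof of Theorem~\ref{thm1.1}, but carrying the weight $h$ along and applying the continuous functional calculus of $A$ \emph{twice}, once in each of two dummy variables. First I would start from the hypothesis: by \eqref{h-syn}, written with the variables renamed $s,t$, for all $s,t\in[\gamma,\Gamma]$ we have
\[
\left(h(t)f(s)-h(s)f(t)\right)\left(h(t)g(s)-h(s)g(t)\right)\ge(\le)\,0 .
\]
Expanding and regrouping the two mixed terms as $h(s)h(t)f(s)g(t)=\bigl(h(s)f(s)\bigr)\bigl(h(t)g(t)\bigr)$ and symmetrically, this reads
\[
h^{2}(t)f(s)g(s)-\bigl(h(s)f(s)\bigr)\bigl(h(t)g(t)\bigr)-\bigl(h(s)g(s)\bigr)\bigl(h(t)f(t)\bigr)+h^{2}(s)f(t)g(t)\ge(\le)\,0
\]
for every $s,t$ in $\spe(A)\subset[\gamma,\Gamma]$.

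Next I would freeze $s$ and regard the left-hand side as a continuous real-valued function of $t$ on $\spe(A)$, which is non-negative (non-positive). Applying property \eqref{eq1.2} together with the functional calculus in the variable $t$ — noting that $h(A)$, $f(A)$, $g(A)$ mutually commute, all being functions of the single operator $A$ — yields the operator inequality
\[
f(s)g(s)\,h^{2}(A)-h(s)f(s)\,h(A)g(A)-h(s)g(s)\,h(A)f(A)+h^{2}(s)\,f(A)g(A)\ge(\le)\,0 .
\]
Pairing with a unit vector $x\in H$ gives, for every $s\in\spe(A)$,
\[
f(s)g(s)\,c_{1}-h(s)f(s)\,c_{2}-h(s)g(s)\,c_{3}+h^{2}(s)\,c_{4}\ge(\le)\,0,
\]
where $c_{1}=\langle h^{2}(A)x,x\rangle$, $c_{2}=\langle h(A)g(A)x,x\rangle$, $c_{3}=\langle h(A)f(A)x,x\rangle$, $c_{4}=\langle f(A)g(A)x,x\rangle$ are all \emph{real}, since $h^{2}(A)$, $h(A)g(A)$, $h(A)f(A)$, $f(A)g(A)$ are selfadjoint.

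Finally, the left-hand side of the last display is a non-negative (non-positive) continuous real function of $s$ on $\spe(A)$, so a second application of \eqref{eq1.2} and the functional calculus — now in $s$ — followed once more by pairing with $x$, produces
\[
c_{1}c_{4}-c_{2}c_{3}-c_{3}c_{2}+c_{4}c_{1}\ge(\le)\,0,
\]
that is, $c_{1}c_{4}\ge(\le)\,c_{2}c_{3}$, which is precisely \eqref{eq1.4}; the $h$-asynchronous case is obtained simply by reversing every inequality throughout.

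I do not expect a genuine obstacle here, only one point that deserves care: the legitimacy of the two successive ``partial'' applications of the functional calculus. At each stage one must verify that the quantity being fed into \eqref{eq1.2} is a continuous \emph{real-valued} function of the active variable while the remaining quantities are frozen as real scalars — and this is exactly what the commutativity of $h(A),f(A),g(A)$ and the selfadjointness (hence real expectation values) of their products guarantee. The only other thing to watch is the bookkeeping in the regrouping of the mixed terms before each application, which is what makes the pointwise inequality amenable to the calculus in one variable at a time.
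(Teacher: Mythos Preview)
Your proof is correct and follows essentially the same route as the paper: start from the pointwise $h$-synchronicity inequality, expand it, apply the functional calculus in one variable (via property \eqref{eq1.2}) and take the inner product, then repeat in the second variable and take the inner product again. The paper does not reprove Theorem~\ref{thm1.2} itself (it is quoted from \cite{MA1}), but its proof of the more general Theorem~\ref{thm2.1} is exactly this two-step functional-calculus argument, with the only addition being an application of a positive unital map after each substitution; specializing $\varphi=\phi=\mathrm{id}$, $B=A$, $y=x$ in that proof recovers yours verbatim.
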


For more related results, we refer the reader to \cite{SD2}, \cite{MB} and \cite{MM}.\\

In this work,  some inequalities for continuous $h$-synchronous ($h$-asynchronous)
functions of linear bounded selfadjoint operators under positive linear maps in Hilbert spaces of the
Pompeiu--\v{C}eby\v{s}ev functional \eqref{P} are proved. The
proof Techniques are similar to that ones used in \cite{SD2}.

\section{Main results}

Let us start with the following result regarding the positivity of
$\mathcal{P}\left(f,g,h;A,x\right)$.
\begin{theorem}
\label{thm2.1}Let $A$ be a selfadjoint operator with
$\spe\left(A\right)\subset \left[\gamma,\Gamma\right]$  for some
real numbers $\gamma,\Gamma$ with $\gamma<\Gamma$. Let $\phi,\varphi:\mathcal{B}\left(\mathcal{H} \right)\to \mathcal{B}\left(\mathcal{K} \right)$ be a linear unital maps. Let
$h:\left[\gamma,\Gamma\right]\to \mathbb{R}_+$ be a non-negative
 and continuous function.  If $f,g:
\left[ {\gamma,\Gamma} \right]\to \mathbb{R}$ are continuous and
both $f$ and $g$ are $h$-synchronous ($h$-asynchronous) on $\left[
{\gamma,\Gamma} \right]$, then
 
 \begin{multline}
\label{eq2.1}\left\langle { \phi \left(h^2 \left( B \right)\right)  y,y } \right\rangle  \cdot\left\langle { \varphi \left(f\left( A \right)g\left(
	A \right)\right)x,x } \right\rangle
\\
+  \left\langle { \varphi \left(h^2 \left( A \right)\right)x,x } \right\rangle \cdot  \left\langle { \phi \left(f\left( B \right)g\left( B
	\right)\right) 
	y,y } \right\rangle   
\\
\ge   \left\langle {\varphi \left(h\left( A \right)f\left( A
	\right)\right)x,x  } \right\rangle \cdot     \left\langle {\phi \left(h\left( B \right)g\left( B \right)\right) y,y } \right\rangle 
\\
+  \left\langle { \varphi \left( h\left( A
	\right)g\left( A \right)\right)x,x } \right\rangle\cdot  \left\langle { \phi \left( h\left( B \right)f\left( B
	\right)\right)y,y } \right\rangle   
\end{multline}
for each $x,y\in H$ with $\|x\|=\|y\|=1$.

 \begin{multline}
\label{eq2.1*}\left\langle { \phi \left(h^2 \left( A \right)\right)  y,y } \right\rangle  \cdot\left\langle { \varphi \left(f\left( A \right)g\left(
	A \right)\right)x,x } \right\rangle
\\
+  \left\langle { \varphi \left(h^2 \left( A \right)\right)x,x } \right\rangle \cdot  \left\langle { \phi \left(f\left( A \right)g\left(A
	\right)\right) 
	y,y } \right\rangle   
\\
\ge  (\le)   \left\langle {\varphi \left(h\left( A \right)f\left( A
	\right)\right)x,x  } \right\rangle \cdot     \left\langle {\phi \left(h\left( A \right)g\left( A \right)\right)y,y } \right\rangle 
\\
+  \left\langle { \varphi \left( h\left( A
	\right)g\left( A \right)\right)x,x } \right\rangle\cdot  \left\langle { \phi \left( h\left( A \right)f\left( A
	\right)\right)y,y } \right\rangle   
\end{multline}
for each $x\in H$ with $\|x\ =1$.

\end{theorem}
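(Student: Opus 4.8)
The plan is to run the familiar functional-calculus mechanism twice, in the spirit of \cite{SD2}: from a pointwise non-negative function on $[\gamma,\Gamma]$ one passes to an operator inequality by \eqref{eq1.2}, pushes it through a positive linear map, and then pairs with a unit vector; carrying this out once for the triple $(A,\varphi,x)$ and once for $(B,\phi,y)$ produces \eqref{eq2.1}. (Here one reads the hypotheses as also requiring $\spe(B)\subseteq[\gamma,\Gamma]$, and \eqref{eq2.1*} is understood for all $x,y\in H$ with $\|x\|=\|y\|=1$.)

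Since $f$ and $g$ are $h$-synchronous, expanding the product in \eqref{h-syn} shows that for each fixed $t\in[\gamma,\Gamma]$ the function
\[
s\longmapsto h^{2}(t)f(s)g(s)-h(t)g(t)\,h(s)f(s)-h(t)f(t)\,h(s)g(s)+f(t)g(t)h^{2}(s)
\]
is non-negative on $[\gamma,\Gamma]$. Applying the functional calculus of $A$ together with \eqref{eq1.2}, and using multiplicativity ($f(A)g(A)=(fg)(A)$, $h(A)f(A)=(hf)(A)$, and so on), we get for each such $t$ the operator inequality
\[
h^{2}(t)\,f(A)g(A)-h(t)g(t)\,h(A)f(A)-h(t)f(t)\,h(A)g(A)+f(t)g(t)\,h^{2}(A)\ge 0 .
\]
Applying the order-preserving map $\varphi$ and then $\langle\,\cdot\,x,x\rangle$ with $\|x\|=1$ yields, for all $t\in[\gamma,\Gamma]$,
\[
h^{2}(t)\,p-h(t)g(t)\,q-h(t)f(t)\,r+f(t)g(t)\,s\ge 0 ,
\]
where $p:=\langle\varphi(f(A)g(A))x,x\rangle$, $q:=\langle\varphi(h(A)f(A))x,x\rangle$, $r:=\langle\varphi(h(A)g(A))x,x\rangle$, $s:=\langle\varphi(h^{2}(A))x,x\rangle$ are \emph{real} scalars, since $\varphi$ preserves adjoints and the four operators involved are selfadjoint, being commuting products of functions of $A$.

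The left-hand side of the last display is a continuous, real-valued, non-negative function of $t$ on $[\gamma,\Gamma]\supseteq\spe(B)$ — a real linear combination of $h^{2}$, $hg$, $hf$, $fg$. Repeating the same three moves with $B$, the positive map $\phi$, and the unit vector $y$ gives
\[
\langle\phi(h^{2}(B))y,y\rangle\,p+\langle\phi(f(B)g(B))y,y\rangle\,s\ \ge\ \langle\phi(h(B)g(B))y,y\rangle\,q+\langle\phi(h(B)f(B))y,y\rangle\,r ,
\]
and substituting back the values of $p,q,r,s$ is precisely \eqref{eq2.1}. In the $h$-asynchronous case every inequality above reverses, which gives the $(\le)$ form; and \eqref{eq2.1*} is the special case $B=A$ of \eqref{eq2.1}. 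The whole argument is routine once the idea of invoking \eqref{eq1.2} twice is in place — the only point deserving attention is the reality of $p,q,r,s$ in the first round, without which \eqref{eq1.2} (stated for real-valued functions) could not be applied in the second; note also that only positivity of $\phi,\varphi$ is used, unitality being superfluous for \eqref{eq2.1}–\eqref{eq2.1*}.
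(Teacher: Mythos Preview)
Your proof is correct and follows essentially the same route as the paper's: expand the $h$-synchrony condition, apply the functional calculus and the positive map once for $(A,\varphi,x)$ to reduce to a scalar-coefficient inequality in the remaining variable, then repeat for $(B,\phi,y)$, with \eqref{eq2.1*} obtained by specializing $B=A$. Your explicit check that the intermediate scalars $p,q,r,s$ are real (so that \eqref{eq1.2} applies in the second pass) and your remark that unitality is unused are welcome clarifications, but the overall strategy coincides with the paper's.
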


\begin{proof}
Since $f$ and $g$ are $h$-synchronous then
\begin{align*}
\left( {h\left( s \right)f\left( t \right) - h\left( t
\right)f\left( s \right)} \right)\left( {h\left( s \right)g\left(
t \right) - h\left( t \right)g\left( s \right)} \right)  \ge 0,
\end{align*}
and this is allow us to write
\begin{multline}
h^2\left( s \right)f\left( t \right)g\left( t \right)+h^2\left( t
\right)f\left( s \right)g\left( s \right) 
\\
\ge h\left( s
\right)h\left( t \right)f\left( t \right)g\left( s \right)
+h\left( s \right)h\left( t \right) g\left( t \right)f\left( s
\right) \label{eq2.2}
\end{multline}
for all $t, s\in [a,b]$. We fix $s \in \left[a,b\right]$ and  apply the functional calculus; property \eqref{eq1.2}  
for inequality \eqref{eq2.2} for the operator $A$, then we have
for each $x \in H$ with $\|x\|=1$, that
\begin{multline*}
 h^2 \left( s \right) 1_H \cdot f\left( A \right)g\left(
		A \right) + h^2 \left( A \right)\cdot f\left( s \right)g\left( s
		\right)  1_H 
\\
\ge   h\left( A \right)f\left( A
		\right)\cdot h\left( s \right)g\left( s \right)  1_H  + h\left( A
		\right)g\left( A \right)\cdot h\left( s \right)f\left( s
		\right)  1_H, 
\end{multline*}
and since $\varphi$ is normalized positive linear map we get
\begin{multline*}
h^2 \left( s \right) 1_H \cdot \varphi \left(f\left( A \right)g\left(
A \right)\right) + \varphi \left(h^2 \left( A \right)\right) \cdot f\left( s \right)g\left( s
\right)  1_H 
\\
\ge   \varphi \left(h\left( A \right)f\left( A
\right)\right) \cdot h\left( s \right)g\left( s \right)  1_H  + \varphi \left( h\left( A
\right)g\left( A \right)\right)\cdot h\left( s \right)f\left( s
\right)  1_H, 
\end{multline*}
and this is equivalent to write
\begin{multline}
h^2 \left( s \right) 1_H \cdot\left\langle { \varphi \left(f\left( A \right)g\left(
A \right)\right)x,x } \right\rangle+  \left\langle { \varphi \left(h^2 \left( A \right)\right)x,x } \right\rangle \cdot f\left( s \right)g\left( s
\right)  1_H 
\\
\ge   \left\langle {\varphi \left(h\left( A \right)f\left( A
	\right)\right)x,x  } \right\rangle \cdot h\left( s \right)g\left( s \right)  1_H  +  \left\langle { \varphi \left( h\left( A
	\right)g\left( A \right)\right)x,x } \right\rangle\cdot h\left( s \right)f\left( s
\right)  1_H, \label{eq2.3}
\end{multline}
Applying property \eqref{eq1.2} again for inequality
\eqref{eq2.3} but for the operator $B$, then we have for each $y\in H$ with $\|y\|=1$, that
\begin{multline*}
  h^2 \left( B \right)  \cdot\left\langle { \varphi \left(f\left( A \right)g\left(
	A \right)\right)x,x } \right\rangle+  \left\langle { \varphi \left(h^2 \left( A \right)\right)x,x } \right\rangle \cdot  f\left( B \right)g\left( B
	\right)    
\\
\ge   \left\langle {\varphi \left(h\left( A \right)f\left( A
	\right)\right)x,x  } \right\rangle \cdot    h\left( B \right)g\left( B \right)  +  \left\langle { \varphi \left( h\left( A
	\right)g\left( A \right)\right)x,x } \right\rangle\cdot  h\left( B \right)f\left( B
	\right),  
\end{multline*}
and since $\phi$ is normalized positive linear map we get
\begin{multline*}
\left\langle { \phi \left(h^2 \left( B \right)\right)  y,y } \right\rangle  \cdot\left\langle { \varphi \left(f\left( A \right)g\left(
	A \right)\right)x,x } \right\rangle+  \left\langle { \varphi \left(h^2 \left( A \right)\right)x,x } \right\rangle \cdot  \left\langle { \phi \left(f\left( B \right)g\left( B
	\right)\right) 
	y,y } \right\rangle   
\\
\ge   \left\langle {\varphi \left(h\left( A \right)f\left( A
	\right)\right)x,x  } \right\rangle \cdot     \left\langle {\phi \left(h\left( B \right)g\left( B \right)\right) y,y } \right\rangle +  \left\langle { \varphi \left( h\left( A
	\right)g\left( A \right)\right)x,x } \right\rangle\cdot  \left\langle { \phi \left( h\left( B \right)f\left( B
	\right)\right)y,y } \right\rangle,  
\end{multline*}
 for each $x,y\in H$ with $\|x\|=\|y\|=1$, which gives the required results in \eqref{eq2.1}. To obtain \eqref{eq2.1*} we set $B=A$ in \eqref{eq2.1}. The revers case follows trivially, and this
 completes the proof.
\end{proof}
\begin{corollary}
 Let $A$ be a selfadjoint operator with
	$\spe\left(A\right)\subset \left[\gamma,\Gamma\right]$  for some
	real numbers $\gamma,\Gamma$ with $\gamma<\Gamma$. Let $\phi,\varphi:\mathcal{B}\left(\mathcal{H} \right)\to \mathcal{B}\left(\mathcal{K} \right)$ be a linear unital maps. Let
	$h:\left[\gamma,\Gamma\right]\to \mathbb{R}_+$ be a non-negative
	and continuous function. If $f,g:
	\left[ {\gamma,\Gamma} \right]\to \mathbb{R}$ are continuous and
	both $f$ and $g$ are  synchronous ( asynchronous) on $\left[
	{\gamma,\Gamma} \right]$, then
	
	\begin{multline*}
	 \left\langle { \varphi \left(f\left( A \right)g\left(
		A \right)\right)x,x } \right\rangle+    \left\langle { \phi \left(f\left( B \right)g\left( B
		\right)\right) 
		y,y } \right\rangle   
	\\
	\ge  (\le)   \left\langle {\varphi \left( f\left( A
		\right)\right)x,x  } \right\rangle      \left\langle {\phi \left( g\left( B \right)\right) y,y } \right\rangle +  \left\langle { \varphi \left(g\left( A \right)\right)x,x } \right\rangle   \left\langle { \phi \left( f\left( B
		\right)\right)y,y } \right\rangle   
	\end{multline*}
	for each $x,y\in H$ with $\|x\|=\|y\|=1$. In special case, the following \v{C}eby\v{s}ev inequality for positive linear maps of selfadjoint operator is valid	
	\begin{multline*}
  \left\langle { \varphi \left(f\left( A \right)g\left(
		A \right)\right)x,x } \right\rangle+      \left\langle { \varphi \left(f\left( A \right)g\left(A
		\right)\right) 
		x,x } \right\rangle   
	\\
	\ge (\le)    \left\langle {\varphi \left( f\left( A
		\right)\right)x,x  } \right\rangle      \left\langle {\varphi \left( g\left( A \right)\right) x,x } \right\rangle +  \left\langle { \varphi \left( g\left( A \right)\right)x,x } \right\rangle  \left\langle { \varphi \left(  f\left( A
		\right)\right)x,x } \right\rangle   
	\end{multline*}
	for each $x\in H$ with $\|x\| =1$.
	
\end{corollary}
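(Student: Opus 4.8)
The plan is to obtain the Corollary as the special case $h\equiv 1$ of Theorem \ref{thm2.1}, so that no new estimate is required, only a check of how the hypotheses and the inequalities \eqref{eq2.1}, \eqref{eq2.1*} degenerate under this choice.

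First I would observe that when $h(t)=1$ for every $t\in[\gamma,\Gamma]$ the defining relation \eqref{h-syn} of $h$-synchronicity collapses to the classical condition $\left(f(x)-f(y)\right)\left(g(x)-g(y)\right)\ge(\le)\,0$; hence the hypothesis ``$f,g$ synchronous (asynchronous)'' is precisely the hypothesis ``$f,g$ $h$-synchronous ($h$-asynchronous)'' of Theorem \ref{thm2.1} for this particular $h$. Thus Theorem \ref{thm2.1} applies verbatim, here with $B$ understood (as in that theorem) to be a selfadjoint operator with $\spe(B)\subset[\gamma,\Gamma]$, and we may use both the direct form \eqref{eq2.1} and its reverse, which was noted there to follow trivially.

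Next I would substitute $h\equiv 1$ into \eqref{eq2.1}. By the functional calculus $h^2(A)=1_{\mathcal H}=h^2(B)$, while $h(A)f(A)=f(A)$, $h(A)g(A)=g(A)$, $h(B)f(B)=f(B)$ and $h(B)g(B)=g(B)$. Since $\phi$ and $\varphi$ are unital we have $\phi(1_{\mathcal H})=1_{\mathcal K}=\varphi(1_{\mathcal H})$, so that $\left\langle\phi(h^2(B))y,y\right\rangle=\|y\|^2=1$ and $\left\langle\varphi(h^2(A))x,x\right\rangle=\|x\|^2=1$; these two factors disappear from the left-hand side of \eqref{eq2.1}, and the remaining relation is exactly the first displayed inequality of the Corollary (the reverse inequality coming from the reverse of \eqref{eq2.1} in the asynchronous case). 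For the concluding \v{C}eby\v{s}ev inequality I would then specialize further by taking $B=A$, $y=x$ and $\phi=\varphi$: the two products on the right become $\left\langle\varphi(f(A))x,x\right\rangle\left\langle\varphi(g(A))x,x\right\rangle$ and the two terms on the left become $\left\langle\varphi(f(A)g(A))x,x\right\rangle$, which yields the stated inequality; equivalently one may start from \eqref{eq2.1*} with $h\equiv1$, $\phi=\varphi$ and $y=x$.

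As the statement is a pure specialization of an already-established theorem, there is no genuine obstacle; the only points needing explicit mention are where unitality of $\phi$ and $\varphi$ is invoked (to remove the two $h^2$-factors) and the standing assumption on $\spe(B)$ inherited from Theorem \ref{thm2.1}. If one prefers a self-contained argument, the same conclusion is reached by repeating, word for word, the proof of Theorem \ref{thm2.1} with $h\equiv1$ throughout.
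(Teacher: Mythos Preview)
Your proposal is correct and follows the same approach as the paper: both obtain the Corollary by setting $h(t)=1$ in \eqref{eq2.1} and \eqref{eq2.1*}, and then specializing the second inequality via $\phi=\varphi$, $B=A$, $y=x$. Your write-up is simply more explicit about why classical synchronicity coincides with $h$-synchronicity for $h\equiv1$ and where unitality of $\phi,\varphi$ is used, which is fine.
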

\begin{proof}
	Setting $h(t)=1$ in both \eqref{eq2.1} and \eqref{eq2.1*}. Also, in \eqref{eq2.1*}  take $\phi=\varphi$, $B=A$ and $y=x$.
\end{proof}

\begin{remark}
Setting  $\phi=\varphi$, $B=A$ and $y=x$ in \eqref{eq2.1}, we get
 \begin{multline*}
\left\langle { \varphi \left(h^2 \left( A \right)\right)  x,x } \right\rangle  \cdot\left\langle { \varphi \left(f\left( A \right)g\left(
	A \right)\right)x,x } \right\rangle
\\
+  \left\langle { \varphi \left(h^2 \left( A \right)\right)x,x } \right\rangle \cdot  \left\langle { \varphi \left(f\left( A \right)g\left(A
	\right)\right) 
	x,x } \right\rangle   
\\
\ge  (\le)   \left\langle {\varphi \left(h\left( A \right)f\left( A
	\right)\right)x,x  } \right\rangle \cdot     \left\langle {\varphi \left(h\left( A \right)g\left( A \right)\right)x,x } \right\rangle 
\\
+  \left\langle { \varphi \left( h\left( A
	\right)g\left( A \right)\right)x,x } \right\rangle\cdot  \left\langle { \varphi \left( h\left( A \right)f\left( A
	\right)\right)x,x } \right\rangle   
\end{multline*}
	for each $x\in H$ with $\|x\| =1$.
\end{remark}
The following generalization of Cauchy-Schwarz inequality holds.
\begin{corollary}
\label{cor1}Let $A$ be a selfadjoint operator with
$\spe\left(A\right)\subset \left[\gamma,\Gamma\right]$  for some
real numbers $\gamma,\Gamma$ with $\gamma<\Gamma$. Let $\phi,\varphi:\mathcal{B}\left(\mathcal{H} \right)\to \mathcal{B}\left(\mathcal{K} \right)$ be a linear unital maps. Let
$h:\left[\gamma,\Gamma\right]\to \mathbb{R}_+$ be a non-negative
 and continuous function. If $f: \left[
{\gamma,\Gamma} \right]\to \mathbb{R}$ is continuous and
$h$-synchronous  on $\left[ {\gamma,\Gamma} \right]$, then
  \begin{multline}
\label{ineq.cor1} \left\langle { \phi \left(h^2 \left( B \right)\right)  y,y } \right\rangle  \cdot\left\langle { \varphi \left(f^2\left( A \right) \right)x,x } \right\rangle+  \left\langle { \varphi \left(h^2 \left( A \right)\right)x,x } \right\rangle \cdot  \left\langle { \phi \left(f^2\left( B \right)\right) 
	y,y } \right\rangle   
\\
\ge   2  \left\langle {\varphi \left(h\left( A \right)f\left( A
	\right)\right)x,x  } \right\rangle \cdot     \left\langle {\phi \left(h\left( B \right)f\left( B \right)\right) y,y } \right\rangle 
\end{multline}
for each $x,y\in H$ with $\|x\|=\|y\|=1$. In particular, we have
\begin{align}
\label{ineq.cor1*} \left\langle { \varphi \left(h^2 \left( A \right)\right)  x,x} \right\rangle  \cdot\left\langle { \varphi \left(f^2\left( A \right) \right)x,x } \right\rangle 
\ge   \left\langle {\varphi \left(h\left( A \right)f\left( A
	\right)\right)x,x  } \right\rangle^2
\end{align}
for each $x \in H$ with $\|x\|=1$.
\end{corollary}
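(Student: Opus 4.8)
The plan is to obtain both inequalities as direct specializations of Theorem \ref{thm2.1}. The key observation is the Remark following Definition \ref{def2}: a function is always $h$-synchronous with itself, regardless of its $h$-monotonicity. Hence the pair $(f,f)$ satisfies the hypotheses of Theorem \ref{thm2.1} whenever $f$ is continuous, so we are free to substitute $g=f$ into inequality \eqref{eq2.1}.

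First I would carry out that substitution. Using the multiplicativity of the functional calculus (property (2) of $\mathcal{G}$), one has $f(A)f(A)=f^2(A)$ and $f(B)f(B)=f^2(B)$, so the two terms on the left-hand side of \eqref{eq2.1} become $\langle \phi(h^2(B))y,y\rangle\cdot\langle \varphi(f^2(A))x,x\rangle$ and $\langle \varphi(h^2(A))x,x\rangle\cdot\langle \phi(f^2(B))y,y\rangle$. On the right-hand side the two summands now coincide, each being equal to $\langle \varphi(h(A)f(A))x,x\rangle\cdot\langle \phi(h(B)f(B))y,y\rangle$, so their sum is exactly twice that quantity. This is precisely \eqref{ineq.cor1}.

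Next, to derive \eqref{ineq.cor1*} I would further specialize \eqref{ineq.cor1} by taking $B=A$, $\phi=\varphi$, and $y=x$. Then the left-hand side collapses to $2\langle \varphi(h^2(A))x,x\rangle\cdot\langle \varphi(f^2(A))x,x\rangle$ and the right-hand side to $2\langle \varphi(h(A)f(A))x,x\rangle^2$; cancelling the common factor $2$ delivers the stated Cauchy--Schwarz type inequality.

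There is essentially no hard step here; the only points needing a line of care are (i) invoking the self-synchronization remark so that Theorem \ref{thm2.1} legitimately applies with $g=f$, and (ii) recording the operator identities $f(A)f(A)=f^2(A)$ and $f(B)f(B)=f^2(B)$ coming from property (2) of the functional calculus. Everything else is the bookkeeping of merging the two repeated terms and performing the substitutions $B=A$, $\phi=\varphi$, $y=x$.
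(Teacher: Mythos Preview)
Your proposal is correct and mirrors the paper's own proof, which simply sets $g=f$ in \eqref{eq2.1} (and in \eqref{eq2.1*} with the further specialization $\phi=\varphi$, $B=A$, $y=x$). Your explicit appeal to the self-synchronization remark and to the multiplicativity of the functional calculus makes the argument slightly more detailed than the paper's terse one-line justification, but the route is the same.
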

\begin{proof}
Setting $f=g$ in both \eqref{eq2.1} and \eqref{eq2.1*}. Also, in \eqref{eq2.1*} take $\phi=\varphi$, $B=A$ and $y=x$, so that the desired results hold.
\end{proof}

\begin{corollary}
\label{cor2} Let $A$ be a selfadjoint operator with
$\spe\left(A\right)\subset \left[\gamma,\Gamma\right]$  for some
real numbers $\gamma,\Gamma$ with $0<\gamma<\Gamma$. Let $\phi,\varphi:\mathcal{B}\left(\mathcal{H} \right)\to \mathcal{B}\left(\mathcal{K} \right)$ be a linear unital maps. If $f,g:
\left[ {\gamma,\Gamma} \right]\to \mathbb{R}$ are continuous and
$t$-synchronous ($t$-asynchronous) on $\left[ {\gamma,\Gamma}
\right]$, then
 
  \begin{multline}
\left\langle { \phi \left(  B^2\right)  y,y } \right\rangle  \cdot\left\langle { \varphi \left(f\left( A \right)g\left(
	A \right)\right)x,x } \right\rangle+  \left\langle { \varphi \left( A^2 \right)x,x } \right\rangle \cdot  \left\langle { \phi \left(f\left( B \right)g\left( B
	\right)\right) 
	y,y } \right\rangle   
\\
\ge (\le)   \left\langle {\varphi \left(Af\left( A
	\right)\right)x,x  } \right\rangle \cdot     \left\langle {\phi \left(Bg\left( B \right)\right) y,y } \right\rangle 
\\
+  \left\langle { \varphi \left(A g\left( A \right)\right)x,x } \right\rangle\cdot  \left\langle { \phi \left(   B  f\left( B
	\right)\right)y,y } \right\rangle   
\end{multline}
 
for each $x,y \in H$ with $\|x\|=\|y\|=1$.
\end{corollary}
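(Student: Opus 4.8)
The plan is to derive Corollary~\ref{cor2} as a direct specialization of Theorem~\ref{thm2.1}, exactly in the spirit of the previous two corollaries. First I would observe that the statement concerns $t$-synchronous (respectively $t$-asynchronous) functions, which by the last clause of Definition~\ref{def2} means precisely that $f$ and $g$ are $h$-synchronous ($h$-asynchronous) with the particular choice $h(t)=t$. Since $\spe(A)\subset[\gamma,\Gamma]$ with $0<\gamma<\Gamma$, this $h$ is strictly positive and continuous on $[\gamma,\Gamma]$, so it is an admissible weight function for Theorem~\ref{thm2.1}; all hypotheses of that theorem are met.

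Next I would simply substitute $h(t)=t$ into inequality~\eqref{eq2.1}. Under this substitution the functional calculus gives $h(A)=A$ and $h^2(A)=A^2$ (because $h=f_1$ and $h^2=f_1^2$ in the notation of property~(4) of the functional calculus), and likewise $h(B)=B$, $h^2(B)=B^2$. Hence $h^2(B)$ becomes $B^2$, $h^2(A)$ becomes $A^2$, $h(A)f(A)$ becomes $Af(A)$, $h(B)g(B)$ becomes $Bg(B)$, $h(A)g(A)$ becomes $Ag(A)$, and $h(B)f(B)$ becomes $Bf(B)$. Plugging these into \eqref{eq2.1} yields term-by-term exactly the four inner products appearing in the claimed inequality, and the $\ge$ direction follows from the synchronous hypothesis; the $\le$ direction follows identically from the asynchronous hypothesis via the reverse inequality in Theorem~\ref{thm2.1}.

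Since nothing beyond this substitution is required, there is no genuine obstacle; the only point deserving care is the bookkeeping verification that each operator expression $h^k(A)$ collapses correctly to the appropriate power or product — in particular that $h(A)f(A)$ truly equals the operator $Af(A)$ obtained from the continuous function $t\mapsto tf(t)$ via the functional calculus, which is immediate from multiplicativity (property~(2)) together with $\mathcal{G}(f_1)=A$. The proof therefore reads:

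\begin{proof}
Apply Theorem \ref{thm2.1} with $h(t)=t$, which is non-negative and continuous on $\left[\gamma,\Gamma\right]$ since $0<\gamma<\Gamma$. By the last part of Definition \ref{def2}, $f$ and $g$ are $t$-synchronous ($t$-asynchronous) precisely when they are $h$-synchronous ($h$-asynchronous) for this choice of $h$. Using the functional calculus we have $h\left( A \right)=A$, $h^2\left( A \right)=A^2$, $h\left( B \right)=B$ and $h^2\left( B \right)=B^2$, so that $h\left( A \right)f\left( A \right)=Af\left( A \right)$, $h\left( A \right)g\left( A \right)=Ag\left( A \right)$, $h\left( B \right)f\left( B \right)=Bf\left( B \right)$ and $h\left( B \right)g\left( B \right)=Bg\left( B \right)$. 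Substituting these into \eqref{eq2.1} gives the required inequality for $x,y\in H$ with $\|x\|=\|y\|=1$. The reverse inequality in the $t$-asynchronous case follows in the same way from the reverse inequality in Theorem \ref{thm2.1}.
\end{proof}
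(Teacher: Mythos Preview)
Your proposal is correct and follows exactly the paper's own approach: the paper's proof is the single line ``Setting $h(t)=t$ in \eqref{eq2.1} we get the desired result,'' and your argument is precisely this specialization, with additional (and accurate) bookkeeping on why $h(t)=t$ is admissible and how the functional calculus collapses $h(A)$, $h^2(A)$, etc.
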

\begin{proof}
Setting $h(t)=t$ in \eqref{eq2.1} we get the desired result.
\end{proof}

Before we state our next remark, we interested to give  the
following example.
\begin{example}
\label{example2.11}
\begin{enumerate}
\item If $f(s)=s^p$ and $g(s)=s^q$ ($s>0$), then $f$ and $g$ are
$t^r$-synchronous for all $p,q>r>0$  and $t^r$-asynchronous
 for all $p> r>q>0$.

\item If $f(s)=s^p$ and $g(s)=\log (s)$ ($s>1$), then  $f$ is
$t^r$-synchronous
 for all $p < r<0$ and $t^r$-asynchronous
 for all $r< p<0$.

\item If  $f(s)=\exp(s)=g(s)$, then $f$ is $t^r$-synchronous for
all for all $r\in \mathbb{R}$.
\end{enumerate}
\end{example}

\begin{remark}
\label{remark1} Using Example
\ref{example2.11} we can observe the following special cases:
 \begin{enumerate}
 \item If $f\left( s \right)=s^p$ and $g\left( s \right)=s^q$ ($s>0$),
then $f$ and $g$ are $t^r$-synchronous for all $p,q > r>0$, so
that we have
\begin{multline*}
 \left\langle {\phi \left(B^{2r}\right)y,y} \right\rangle \left\langle {\varphi \left(A^{p+q}\right)x,x} \right\rangle  + \left\langle {\varphi\left( A^{2r}\right)x,x} \right\rangle \left\langle {\phi \left(B^{p+q}\right)y,y} \right\rangle  \\
  \ge   \left\langle {\varphi \left(B^{q+r}\right)y,y} \right\rangle \left\langle {\phi \left(A^{p+r}\right)x,x} \right\rangle  + \left\langle {\varphi \left(A^{q+r}\right)x,x} \right\rangle \left\langle {\phi\left(B^{p+r}\right) y,y}
  \right\rangle.
\end{multline*}
If $p>r>q>0$, then $f$ and $g$ are $t^r$-asynchronous and thus the
reverse inequality holds.\\

\item If $f\left( s \right)=s^p$   and $g\left( s \right)=\log
 s$ ($s>1$),
 then $f$ and $g$ are $t^r$-synchronous for all $p < r<0$ we have
\begin{multline*}
 \left\langle {\phi \left(B^{2r}\right)y,y} \right\rangle \left\langle {\varphi\left(A^p \log\left( A \right)\right)x,x} \right\rangle  + \left\langle {\varphi\left(A^{2r}\right)x,x} \right\rangle \left\langle {\phi\left(B^p\log\left( B \right)\right)y,y} \right\rangle  \\
  \ge   \left\langle {\varphi\left(B^r\log\left(B \right)\right)y,y} \right\rangle \left\langle {\phi\left(A^{p+r}\right)x,x} \right\rangle  + \left\langle {\varphi\left(A\log\left( A \right)\right)x,x} \right\rangle \left\langle {\phi\left(B^{p+r}\right)y,y}
  \right\rangle.
\end{multline*}
If $r<p<0$, then $f$ and $g$ are $t^r$-asynchronous and thus the reverse inequality holds.\\

 \item If $f\left( s \right)=\exp\left(s\right)=g\left( s \right)$,
 then $f$ and $g$ are $t^r$-synchronous for all $r \in
 \mathbb{R}$, so that we have
 \begin{multline*}
 \left\langle {\phi\left(B^{2r}\right)y,y} \right\rangle \left\langle {\varphi\left(\exp\left( 2A \right)\right)x,x} \right\rangle  + \left\langle {\varphi\left(A^{2r}\right)x,x} \right\rangle \left\langle {\phi\left(\exp\left( 2B \right)\right)y,y} \right\rangle  \\
  \ge   2 \left\langle {\varphi\left(A^{r}\exp\left( A \right)\right)x,x} \right\rangle\left\langle {\phi\left(B^{r}\exp\left(B \right)\right)y,y} \right\rangle.\\
\end{multline*}
 \end{enumerate}
Therefore, by choosing an appropriate function $h$ such that the
assumptions in Remark \ref{remark1} are fulfilled then one may
generate family of inequalities from \eqref{eq2.1}.
\end{remark}

\begin{corollary}
\label{cor3} Let $A$ be a selfadjoint operator with
$\spe\left(A\right)\subset \left[\gamma,\Gamma\right]$   for some
real numbers $\gamma,\Gamma$ with $0<\gamma<\Gamma$. Let $\varphi:\mathcal{B}\left(\mathcal{H} \right)\to \mathcal{B}\left(\mathcal{K} \right)$ be a linear unital map.  If $f: \left[
{\gamma,\Gamma} \right]\to \mathbb{R}$ is continuous and $f$ is
$t$-synchronous   on $\left[ {\gamma,\Gamma} \right]$, then
\begin{align}
\left\langle { \varphi \left(  A^2\right)  x,x } \right\rangle  \cdot\left\langle { \varphi \left(f^2\left( A \right) \right)x,x } \right\rangle 
\ge    \left\langle {\varphi \left(Af\left( A
	\right)\right)x,x  } \right\rangle^2 \label{eq2.7}
\end{align}
for each $x \in H$ with $\|x\|=1$. 
\end{corollary}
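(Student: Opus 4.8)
The plan is to obtain \eqref{eq2.7} as an immediate specialization of Corollary \ref{cor1}. First I would observe that, because $0<\gamma<\Gamma$, the function $h(t)=t$ is non-negative and continuous on $[\gamma,\Gamma]$, so it is an admissible weight in the sense of Definition \ref{def2}. The hypothesis that $f$ is $t$-synchronous is precisely the assertion that $f$ is $h$-synchronous for this choice $h(t)=t$; moreover, by the remark following Definition \ref{def2}, a function is always $h$-synchronous with itself, so the pair $(f,f)$ is covered and Corollary \ref{cor1} applies with $g=f$ and $h(t)=t$.

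Next I would invoke inequality \eqref{ineq.cor1*} of Corollary \ref{cor1} with this $h$. Since the functional calculus gives $h^2(A)=A^2$ and $h(A)f(A)=Af(A)$, \eqref{ineq.cor1*} reads
\[
\left\langle \varphi\!\left(A^2\right)x,x\right\rangle\cdot\left\langle \varphi\!\left(f^2(A)\right)x,x\right\rangle \ \ge\ \left\langle \varphi\!\left(Af(A)\right)x,x\right\rangle^2 ,
\]
which is exactly \eqref{eq2.7}. Equivalently, one may start from Corollary \ref{cor2}: putting $g=f$, $\phi=\varphi$, $B=A$, $y=x$ there collapses the four-term inequality to $2\left\langle\varphi(A^2)x,x\right\rangle\left\langle\varphi(f^2(A))x,x\right\rangle\ge 2\left\langle\varphi(Af(A))x,x\right\rangle^2$, and dividing by $2$ yields the claim.

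I do not anticipate any genuine obstacle here, since the statement is a direct instance of the previously proved results. The only point that requires attention is the strengthened hypothesis $0<\gamma<\Gamma$, which is exactly what guarantees that $t\mapsto t$ is a legitimate non-negative weight on $\spe(A)$, so that Corollary \ref{cor1} (or Corollary \ref{cor2}) applies verbatim; the positivity and unitality of $\varphi$ are used only implicitly, through those corollaries.
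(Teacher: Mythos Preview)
Your proposal is correct and essentially coincides with the paper's own proof: the paper obtains \eqref{eq2.7} by setting $f=g$, $\phi=\varphi$, $B=A$, $y=x$ in Corollary~\ref{cor2}, which is exactly the alternative you spell out. Your first route via \eqref{ineq.cor1*} with $h(t)=t$ is an equally valid one-line specialization of the same family of results.
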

\begin{proof}
Setting $f=g$,  $\phi=\varphi$, $B=A$ and $y=x $ in Corollary \ref{cor2} we get the desired result.
\end{proof}

\begin{corollary}
\label{cor4} Let $A$ be an invertible selfadjoint operator with
$\spe\left(A\right)\subset \left[\gamma,\Gamma\right]$  for some
real numbers $\gamma,\Gamma$ with $\gamma<\Gamma$. Let $\phi,\varphi:\mathcal{B}\left(\mathcal{H} \right)\to \mathcal{B}\left(\mathcal{K} \right)$ be a linear unital maps. Let $h: \left[
{\gamma,\Gamma} \right]\to \mathbb{R}$ be a non-negative
continuous. If $f: \left[ {\gamma,\Gamma} \right]\to \mathbb{R}$
is continuous and  $h$-synchronous, then
 \begin{multline}
\label{eq2.9}\left\langle { \phi \left(h^2 \left( B \right)\right)  y,y } \right\rangle  \cdot\left\langle { \varphi \left(f\left( A \right) \right)x,x } \right\rangle
+  \left\langle { \varphi \left(h^2 \left( A \right)\right)x,x } \right\rangle \cdot  \left\langle { \phi \left(f\left( B \right) \right) 
	y,y } \right\rangle   
\\
\ge   \left\langle {\varphi \left(h\left( A \right)f\left( A
	\right)\right)x,x  } \right\rangle \cdot     \left\langle {\phi \left(h\left( B \right) \right) y,y } \right\rangle 
\\
+  \left\langle { \varphi \left( h\left( A
	\right) \right)x,x } \right\rangle\cdot  \left\langle { \phi \left( h\left( B \right)f\left( B
	\right)\right)y,y } \right\rangle   
\end{multline}
for each $x \in H$ with $\|x\|=1$. In particular, we have
 \begin{multline}
 \left\langle { \phi \left(h^2 \left( A^{-1} \right)\right)  x,x } \right\rangle  \cdot\left\langle { \varphi \left(f\left( A \right) \right)x,x } \right\rangle
+  \left\langle { \varphi \left(h^2 \left( A \right)\right)x,x } \right\rangle \cdot  \left\langle { \phi \left(f\left( A^{-1} \right) \right) 
	x,x } \right\rangle   
\\
\ge   \left\langle {\varphi \left(h\left( A \right)f\left( A
	\right)\right)x,x  } \right\rangle \cdot     \left\langle {\phi \left(h\left( A^{-1} \right) \right) x,x } \right\rangle\\ +  \left\langle { \varphi \left( h\left( A
	\right) \right)x,x } \right\rangle\cdot  \left\langle { \phi \left( h\left( A^{-1} \right)f\left( A^{-1}
	\right)\right)x,x } \right\rangle   \label{2.9*}
\end{multline}
 \end{corollary}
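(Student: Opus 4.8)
The plan is to obtain \eqref{eq2.9} by specializing inequality \eqref{eq2.1} of Theorem \ref{thm2.1} to the constant function $g\equiv 1$, and then to read off \eqref{2.9*} from \eqref{eq2.9} by the further substitution $B=A^{-1}$, $y=x$. No new estimate is needed; the content is entirely in choosing the right specialization and checking that its hypotheses are met.

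Concretely, I would first set $g(t)=1$ for all $t\in\left[\gamma,\Gamma\right]$ and let $B$ be any selfadjoint operator with $\spe(B)\subset\left[\gamma,\Gamma\right]$ (the operator $B$ being implicit in the statement, exactly as in Theorem \ref{thm2.1}). This $g$ is continuous, and the assumption that $f$ be ``$h$-synchronous'' is to be understood here as the statement that $f$ and this constant $g$ satisfy \eqref{h-syn}, i.e. $\left(h(y)f(x)-h(x)f(y)\right)\left(h(y)-h(x)\right)\ge 0$ for all $x,y\in\left[\gamma,\Gamma\right]$; hence Theorem \ref{thm2.1} applies to the pair $(f,g)$. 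Since the continuous functional calculus sends the constant function $1$ to the identity (property (4) of $\mathcal{G}$), we have $g(A)=1_{\mathcal{H}}$ and $g(B)=1_{\mathcal{H}}$, so that $f(A)g(A)=f(A)$, $f(B)g(B)=f(B)$, $h(A)g(A)=h(A)$ and $h(B)g(B)=h(B)$. Substituting these four identities into \eqref{eq2.1} makes it collapse term by term into precisely \eqref{eq2.9}. For the ``in particular'' part, since $A$ is invertible and selfadjoint, $A^{-1}$ is a bounded selfadjoint operator whose spectrum is $\{\lambda^{-1}:\lambda\in\spe(A)\}$, a compact subset of $\mathbb{R}\setminus\{0\}$; taking $B=A^{-1}$ and $y=x$ in \eqref{eq2.9} then yields \eqref{2.9*} verbatim.

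I expect the only real obstacle to be interpretive rather than computational, and it has three aspects. First, one must recognise that the correct specialization is $g\equiv 1$ and not the seemingly natural $g=h$: the latter would turn two of the four bilinear terms into $\left\langle \phi(h^2(B))y,y\right\rangle$ and $\left\langle \varphi(h^2(A))x,x\right\rangle$ and reduce \eqref{eq2.1} to a trivial identity, not to \eqref{eq2.9}. Second, the hypothesis ``$f$ is $h$-synchronous'' cannot here be read as ``$f$ is $h$-synchronous with itself'' (which is automatic by the Remark following Definition \ref{def2}), since \eqref{eq2.9} genuinely fails for general continuous $f$; it must mean ``$f$ and the constant function $1$ are $h$-synchronous''. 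Third, a minor point: for the substitution $B=A^{-1}$ to be legitimate one needs $h$ and $f$ to be defined and continuous on an interval containing $\spe(A)\cup\spe(A^{-1})$ --- e.g. by assuming $\left[\gamma,\Gamma\right]\subset(0,\infty)$ and extending $h,f$, or taking $\gamma\Gamma=1$. Modulo these observations the proof is a one-line substitution into \eqref{eq2.1}.
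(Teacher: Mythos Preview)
Your proposal is correct and follows exactly the paper's own proof: set $g\equiv 1$ in \eqref{eq2.1} to obtain \eqref{eq2.9}, then take $B=A^{-1}$ and $y=x$ to deduce \eqref{2.9*}. Your additional interpretive remarks (on why $g=h$ would be the wrong specialization, on the intended meaning of ``$f$ is $h$-synchronous'', and on the domain issue for $\spe(A^{-1})$) are well taken and in fact sharpen points that the paper leaves implicit.
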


\begin{proof}
Sٍetting $g=1$ in \eqref{eq2.1} we get the first inequality \eqref{eq2.9}. The second inequality holds by setting $B=A^{-1}$ and $y=x$ in \eqref{eq2.9}.
\end{proof}

\begin{theorem}
\label{thm2.2} Let $A$ be a selfadjoint operator with
$\spe\left(A\right)\subset \left[\gamma,\Gamma\right]$ for some
real numbers $\gamma,\Gamma$ with $\gamma<\Gamma$. Let $\phi,\varphi:\mathcal{B}\left(\mathcal{H} \right)\to \mathcal{B}\left(\mathcal{K} \right)$ be two linear unital maps. Let $h: \left[
{\gamma,\Gamma} \right]\to \mathbb{R}$ be a non-negative
continuous. 
 If $f,g: \left[ {\gamma,\Gamma} \right]\to \mathbb{R}$ are
continuous and both $f$ and $g$ are $h$-synchronous
($h$-asynchronous) on $\left[ {\gamma,\Gamma} \right]$, then
\begin{multline}
\left\langle {\phi\left( h^2\left( B \right)\right) y,y} \right\rangle 
\cdot f\left( \left\langle {\varphi \left(A\right)x,x}
\right\rangle  \right)  g\left( \left\langle {\varphi \left(A\right)x,x}
\right\rangle \right)  
\\
+h^2\left( \left\langle {\varphi \left(A\right)x,x}
\right\rangle \right) \cdot \left\langle { \phi\left( f\left( B
	\right)g\left( B \right)\right)y,y } \right\rangle
\\
\ge (\le) \left\langle {\phi\left(h\left( B \right)g\left( B \right) \right)y,y} \right\rangle  f\left( \left\langle {\varphi \left(A\right)x,x}
\right\rangle  \right)h\left(  \left\langle {\varphi \left(A\right)x,x}
\right\rangle 
\right)
\\
+\left\langle { \phi \left( f\left( B
	\right)  h\left( B \right)\right) y,y} \right\rangle h\left( \left\langle {\varphi \left(A\right)x,x}
\right\rangle \right)g\left( \left\langle {\varphi \left(A\right)x,x}
\right\rangle \right) \label{eq2.10}
\end{multline}
 for any $x\in K$ with $\|x\|=\|y\|=1$.
\end{theorem}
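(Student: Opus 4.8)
The plan is to follow the same template as the proof of Theorem \ref{thm2.1}, but with the crucial twist that one of the two scalar variables appearing in the $h$-synchronicity inequality is now frozen to the \emph{real number} $\mu := \langle \varphi(A)x,x\rangle$ rather than being absorbed into a functional calculus of a second operator. The one point that genuinely needs care is legitimacy of the substitution $s\mapsto\mu$ into the functions $f,g,h$, which are defined only on $[\gamma,\Gamma]$. This is handled at the outset: since $\spe(A)\subset[\gamma,\Gamma]$ we have $\gamma 1_{\mathcal{H}}\le A\le \Gamma 1_{\mathcal{H}}$, and because $\varphi$ is a positive unital linear map it preserves this order relation, so $\gamma 1_{\mathcal{K}}\le \varphi(A)\le \Gamma 1_{\mathcal{K}}$; taking the inner product with a unit vector $x$ gives $\mu = \langle\varphi(A)x,x\rangle \in [\gamma,\Gamma]$.

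Next, starting from the $h$-synchronicity hypothesis $\bigl(h(s)f(t)-h(t)f(s)\bigr)\bigl(h(s)g(t)-h(t)g(s)\bigr)\ge 0$ and expanding the product, one obtains, exactly as in \eqref{eq2.2},
\[ h^2(s)f(t)g(t)+h^2(t)f(s)g(s)\ \ge\ h(s)h(t)f(t)g(s)+h(s)h(t)g(t)f(s) \]
for all $s,t\in[\gamma,\Gamma]$. I would then fix $s=\mu=\langle\varphi(A)x,x\rangle$, which by the first step is admissible, so that the display becomes an inequality between the two continuous real functions of $t$
\[ t\mapsto h^2(\mu)\,f(t)g(t)+f(\mu)g(\mu)\,h^2(t) \qquad\text{and}\qquad t\mapsto h(\mu)g(\mu)\,f(t)h(t)+h(\mu)f(\mu)\,g(t)h(t). \]
Applying the functional calculus property \eqref{eq1.2} for the operator $B$ (whose spectrum, as in Theorem \ref{thm2.1}, also lies in $[\gamma,\Gamma]$) and using linearity and multiplicativity of the calculus, this turns into the operator inequality
\[ h^2(\mu)\,f(B)g(B)+f(\mu)g(\mu)\,h^2(B)\ \ge\ h(\mu)g(\mu)\,f(B)h(B)+h(\mu)f(\mu)\,g(B)h(B) \]
in $\mathcal{B}(\mathcal{H})$.

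Finally I would apply the positive unital linear map $\phi$ to both sides (which preserves the order), take the inner product with the unit vector $y$, and substitute back $\mu=\langle\varphi(A)x,x\rangle$; a direct matching of the four resulting terms against the four terms of \eqref{eq2.10} completes the $h$-synchronous case, and the $h$-asynchronous case is identical with every inequality reversed, since then the starting product is $\le 0$. I do not anticipate a real obstacle here: the argument is a straightforward adaptation of the proof of Theorem \ref{thm2.1}, and the only step one must not overlook is the verification that $\langle\varphi(A)x,x\rangle\in[\gamma,\Gamma]$, which is precisely where unitality and positivity of $\varphi$ are used.
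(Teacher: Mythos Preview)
Your proposal is correct and follows essentially the same route as the paper's own proof: verify $\langle\varphi(A)x,x\rangle\in[\gamma,\Gamma]$ via unitality and positivity of $\varphi$, substitute this scalar for $s$ in the expanded $h$-synchronicity inequality, apply functional calculus in $t$ for the operator $B$, then push through the positive unital map $\phi$ and take the inner product with $y$. If anything, your write-up is slightly cleaner in justifying the spectral containment step and in emphasizing that the fixed variable is a scalar rather than an operator.
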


\begin{proof}
Since $\gamma 1_{H}\le  \left\langle {Ax,x}
\right\rangle \le \Gamma 1_{H}$ then by employing $\varphi$, we get $\gamma 1_{K}\le   \varphi \left(A\right) 
  \le \Gamma 1_{K}$. So that $\gamma \le  \left\langle {\varphi \left(A\right)x,x}
\right\rangle \le \Gamma$  for any $x\in K$ with $\|x\|=1$.  Since $f, g$ are synchronous
\begin{multline}
\left[({h\left( \left\langle {\varphi \left(A\right)x,x}
	\right\rangle \right)f\left( t
\right) - h\left( t \right)f\left( \left\langle {\varphi \left(A\right)x,x}
\right\rangle  \right)} \right]
\\
\times\left[  {h\left(  \left\langle {\varphi \left(A\right)x,x}
	\right\rangle 
\right)g\left( t \right) - h\left( t \right)g\left( \left\langle {\varphi \left(A\right)x,x}
\right\rangle \right)} \right]\ge 0  \label{eq2.12}
\end{multline}
for any $t \in \left[\gamma,\Gamma\right]$ for any $x\in K$ with $\|x\|=1$.

Simplyfying the terms we have
 \begin{multline}
 h^2\left( t \right)f\left( \left\langle {\varphi \left(A\right)x,x}
 \right\rangle  \right)  g\left( \left\langle {\varphi \left(A\right)x,x}
 \right\rangle \right)  
\\
+h^2\left( \left\langle {\varphi \left(A\right)x,x}
\right\rangle \right)  \cdot f\left( t
\right)g\left( t \right)
 	\\
\ge h\left( t \right)g\left( t \right)f\left( \left\langle {\varphi \left(A\right)x,x}
 \right\rangle  \right)h\left(  \left\langle {\varphi \left(A\right)x,x}
 \right\rangle 
 \right)
 \\
+f\left( t
\right)  h\left( t \right)h\left( \left\langle {\varphi \left(A\right)x,x}
 \right\rangle \right)g\left( \left\langle {\varphi \left(A\right)x,x}
 \right\rangle \right).	
 	 \label{eq2.13}
 \end{multline} 
 Fix $x \in K$ with $\|x\| = 1$. By utilizing the continuous functional calculus
 for the operator $B$ we have by
the property \eqref{eq1.2} for inequality \eqref{eq2.13} we
have
 \begin{multline}
h^2\left( B \right)f\left( \left\langle {\varphi \left(A\right)x,x}
\right\rangle  \right)  g\left( \left\langle {\varphi \left(A\right)x,x}
\right\rangle \right)  
\\
+h^2\left( \left\langle {\varphi \left(A\right)x,x}
\right\rangle \right)   \cdot f\left( B
\right)g\left( B \right)
\\
\ge h\left( B \right)g\left( B \right)f\left( \left\langle {\varphi \left(A\right)x,x}
\right\rangle  \right)h\left(  \left\langle {\varphi \left(A\right)x,x}
\right\rangle 
\right)
\\
+f\left( B
\right)  h\left( B \right)h\left( \left\langle {\varphi \left(A\right)x,x}
\right\rangle \right)g\left( \left\langle {\varphi \left(A\right)x,x}
\right\rangle \right).	
\label{eq2.14}
\end{multline}
Taking the map $\phi$	 in the inequality \eqref{eq2.14}, we get
  \begin{multline}
\phi\left( h^2\left( B \right)\right) f\left( \left\langle {\varphi \left(A\right)x,x}
 \right\rangle  \right)  g\left( \left\langle {\varphi \left(A\right)x,x}
 \right\rangle \right)  
 \\
+h^2\left( \left\langle {\varphi \left(A\right)x,x}
\right\rangle \right)  \cdot \phi\left( f\left( B
 \right)g\left( B \right)\right) 
 \\
 \ge \phi\left(h\left( B \right)g\left( B \right) \right)f\left( \left\langle {\varphi \left(A\right)x,x}
 \right\rangle  \right)h\left(  \left\langle {\varphi \left(A\right)x,x}
 \right\rangle 
 \right)
 \\
 +\phi \left( f\left( B
 \right)  h\left( B \right)\right) h\left( \left\langle {\varphi \left(A\right)x,x}
 \right\rangle \right)g\left( \left\langle {\varphi \left(A\right)x,x}
 \right\rangle \right).	
 \label{eq2.15}
 \end{multline}
for any  bounded linear operator $B$ with $\spe\left({B}\right)
\subseteq \left[\gamma,\Gamma\right]$ and $y\in H$ with $\|y\|=1$. 

So that
we can write \eqref{eq2.15} in the form 
 \begin{multline*}
\left\langle {\phi\left( h^2\left( B \right)\right) y,y} \right\rangle 
f\left( \left\langle {\varphi \left(A\right)x,x}
\right\rangle  \right)  g\left( \left\langle {\varphi \left(A\right)x,x}
\right\rangle \right)  
\\
+h^2\left( \left\langle {\varphi \left(A\right)x,x}
\right\rangle \right)  \cdot \left\langle { \phi\left( f\left( B
	\right)g\left( B \right)\right)y,y } \right\rangle
\\
\ge\left\langle {\phi\left(h\left( B \right)g\left( B \right) \right)y,y} \right\rangle  f\left( \left\langle {\varphi \left(A\right)x,x}
\right\rangle  \right)h\left(  \left\langle {\varphi \left(A\right)x,x}
\right\rangle 
\right)
\\
+\left\langle { \phi \left( f\left( B
	\right)  h\left( B \right)\right) y,y} \right\rangle h\left( \left\langle {\varphi \left(A\right)x,x}
\right\rangle \right)g\left( \left\langle {\varphi \left(A\right)x,x}
\right\rangle \right).	
\end{multline*}
for each $x,y\in K$ with $\|x\|=\|y\|=1$, which proves the inequality in    \eqref{eq2.10}.
The reverse sense follows similarly, and the proof is completed. 
 \end{proof}
 \begin{remark}
 Taking $\phi=\varphi$ in \eqref{eq2.12} we get	
 \begin{multline*}
\left\langle {\varphi\left( h^2\left( B \right)\right) y,y} \right\rangle \cdot 
f\left( \left\langle {\varphi \left(A\right)x,x}
\right\rangle  \right)  g\left( \left\langle {\varphi \left(A\right)x,x}
\right\rangle \right)  
\\
+h^2\left( \left\langle {\varphi \left(A\right)x,x}
\right\rangle \right) \cdot \left\langle { \varphi\left( f\left( B
	\right)g\left( B \right)\right)y,y } \right\rangle\cdot  
\\
\ge (\le)\left\langle {\varphi\left(h\left( B \right)g\left( B \right) \right)y,y} \right\rangle  f\left( \left\langle {\varphi \left(A\right)x,x}
\right\rangle  \right)h\left(  \left\langle {\varphi \left(A\right)x,x}
\right\rangle 
\right)
\\
+\left\langle { \varphi \left( f\left( B
	\right)  h\left( B \right)\right) y,y} \right\rangle h\left( \left\langle {\varphi \left(A\right)x,x}
\right\rangle \right)g\left( \left\langle {\varphi \left(A\right)x,x}
\right\rangle \right). 
\end{multline*}
Also, by setting $B=A$ in \eqref{eq2.12} we get
 \begin{multline*}
\left\langle {\phi\left( h^2\left( A \right)\right) y,y} \right\rangle \cdot 
f\left( \left\langle {\varphi \left(A\right)x,x}
\right\rangle  \right)  g\left( \left\langle {\varphi \left(A\right)x,x}
\right\rangle \right)  
\\
+ h^2\left( \left\langle {\varphi \left(A\right)x,x}
\right\rangle \right)  \cdot \left\langle { \phi\left( f\left( A
	\right)g\left( A \right)\right)y,y } \right\rangle
\\
\ge (\le) \left\langle {\phi\left(h\left( A \right)g\left( A \right) \right)y,y} \right\rangle  f\left( \left\langle {\varphi \left(A\right)x,x}
\right\rangle  \right)h\left(  \left\langle {\varphi \left(A\right)x,x}
\right\rangle 
\right)
\\
+\left\langle { \phi \left( f\left( A
	\right)  h\left( A \right)\right) y,y} \right\rangle h\left( \left\langle {\varphi \left(A\right)x,x}
\right\rangle \right)g\left( \left\langle {\varphi \left(A\right)x,x}
\right\rangle \right). 
\end{multline*}
 \end{remark}

 \begin{corollary}
\label{cor5}Let $A$ be a selfadjoint operator with
$\spe\left(A\right)\subset \left[\gamma,\Gamma\right]$  for some
real numbers $\gamma,\Gamma$ with $\gamma<\Gamma$. Let $\phi,\varphi:\mathcal{B}\left(\mathcal{H} \right)\to \mathcal{B}\left(\mathcal{K} \right)$ be two linear unital maps. Let $h: \left[
{\gamma,\Gamma} \right]\to \mathbb{R}$ be a non-negative
continuous. If $f: \left[ {\gamma,\Gamma} \right]\to \mathbb{R}$
is continuous and  $h$-synchronous on $\left[ {\gamma,\Gamma}
\right]$, then
\begin{multline}
\left\langle {\phi\left( h^2\left( B \right)\right) y,y} \right\rangle 
\cdot f^2\left( \left\langle {\varphi \left(A\right)x,x}
\right\rangle  \right)    
+\left\langle { \phi\left( f^2\left( B
	\right) \right)y,y } \right\rangle\cdot h^2\left( \left\langle {\varphi \left(A\right)x,x}
\right\rangle \right)  
\\
\ge (\le)2 \left\langle {\phi\left(h\left( B \right)f\left( B \right) \right)y,y} \right\rangle  f\left( \left\langle {\varphi \left(A\right)x,x}
\right\rangle  \right)h\left(  \left\langle {\varphi \left(A\right)x,x}
\right\rangle 
\right)
  \label{eq2.17}
\end{multline}
 for any $x\in K$ with $\|x\|=\|y\|=1$. In particular, we have
 \begin{multline*}
 \left\langle {\varphi\left( h^2\left( B \right)\right) y,y} \right\rangle 
 \cdot f^2\left( \left\langle {\varphi \left(A\right)x,x}
 \right\rangle  \right)    
 +\left\langle { \varphi\left( f^2\left( B
 	\right) \right)y,y } \right\rangle\cdot h^2\left( \left\langle {\varphi \left(A\right)x,x}
 \right\rangle \right)  
 \\
 \ge (\le)2 \left\langle {\varphi\left(h\left( B \right)f\left( B \right) \right)y,y} \right\rangle  f\left( \left\langle {\varphi \left(A\right)x,x}
 \right\rangle  \right)h\left(  \left\langle {\varphi \left(A\right)x,x}
 \right\rangle 
 \right),
 \end{multline*}
   also, we have
\begin{multline*}
\left\langle {\phi\left( B^2\right) y,y} \right\rangle
\cdot f^2\left( \left\langle {\varphi \left(A\right)x,x}
\right\rangle  \right)  
+   \left\langle {\varphi \left(A^2\right)x,x}
\right\rangle  \cdot \left\langle { \phi\left( f^2\left( B
	\right) \right)y,y } \right\rangle
\\
\ge (\le) \left\langle {\phi\left(Bf\left( B \right) \right)y,y} \right\rangle  f\left( \left\langle {\varphi \left(A\right)x,x}
\right\rangle  \right)   \left\langle {\varphi \left(A\right)x,x}
\right\rangle^2 
\\
+\left\langle { \phi \left( Bf\left( B
	\right)   \right) y,y} \right\rangle   \left\langle {\varphi \left(A\right)x,x}
\right\rangle^2 f\left( \left\langle {\varphi \left(A\right)x,x}
\right\rangle \right) 
\end{multline*}
 for any $x\in K$ with $\|x\|=\|y\|=1$.  
\end{corollary}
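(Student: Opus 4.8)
The plan is to obtain all three displayed inequalities as direct specializations of Theorem \ref{thm2.2}, so there is essentially nothing new to prove. First I would set $g=f$ in \eqref{eq2.10}. On its left-hand side this turns the factor $f\bigl(\langle\varphi(A)x,x\rangle\bigr)g\bigl(\langle\varphi(A)x,x\rangle\bigr)$ into $f^2\bigl(\langle\varphi(A)x,x\rangle\bigr)$ and the operator $\phi\bigl(f(B)g(B)\bigr)$ into $\phi\bigl(f^2(B)\bigr)$, which is exactly the left-hand side of \eqref{eq2.17}. On the right-hand side the two summands of \eqref{eq2.10} become
\[
\langle\phi\bigl(h(B)f(B)\bigr)y,y\rangle\, f\bigl(\langle\varphi(A)x,x\rangle\bigr)\, h\bigl(\langle\varphi(A)x,x\rangle\bigr)
\quad\text{and}\quad
\langle\phi\bigl(f(B)h(B)\bigr)y,y\rangle\, h\bigl(\langle\varphi(A)x,x\rangle\bigr)\, f\bigl(\langle\varphi(A)x,x\rangle\bigr);
\]
since $f(B)$ and $h(B)$ are continuous functions of the same selfadjoint operator $B$ they commute, so $h(B)f(B)=f(B)h(B)$ and (as $\phi$ is well defined and the scalar factors commute) these two summands are identical. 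Adding them produces the factor $2$ appearing in \eqref{eq2.17}; the asynchronous alternative follows in the same way from the $(\le)$ branch of \eqref{eq2.10}.

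Next I would read off the first ``in particular'' inequality simply by putting $\phi=\varphi$ in \eqref{eq2.17}. For the second one I would instead return to \eqref{eq2.10} with $g=f$ already substituted and then take $h(t)=t$ on $[\gamma,\Gamma]$ (this last specialization implicitly requiring $0<\gamma$, so that $h$ is non-negative, exactly as in Corollaries \ref{cor2} and \ref{cor3}); then $h(B)=B$, $h^2(B)=B^2$, and $h$ evaluated at the scalar $\langle\varphi(A)x,x\rangle$ is just $\langle\varphi(A)x,x\rangle$, so expanding the resulting identity term by term yields the stated inequality, with the reverse one under $t$-asynchronicity.

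I do not expect any genuine obstacle: the only step carrying content is the collapse of the two right-hand summands of \eqref{eq2.10} into a single term with a factor $2$, and this is exactly the point where the hypothesis $g=f$ is used, together with the commutativity of $f(B)$ and $h(B)$ and the additivity of the positive linear map $\phi$. Everything else is substitution into inequalities already established in this section. The one thing to stay careful about in writing it up is keeping straight that $h$ is evaluated both at the operator $B$ (producing $h(B)$ and $h^2(B)$) and at the scalar $\langle\varphi(A)x,x\rangle$ (producing $h$ and $h^2$ of that number), since these play different roles on the two sides of each inequality.
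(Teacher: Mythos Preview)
Your approach is exactly the one in the paper: the paper's entire proof is the single sentence ``Setting $f=g$ in \eqref{eq2.10}, respectively, we get the required results,'' and your proposal just fills in the (routine) details of that substitution together with the further specializations $\phi=\varphi$ and $h(t)=t$ for the two ``in particular'' cases. There is nothing to add or correct.
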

\begin{proof}
Setting $f=g$ in \eqref{eq2.10}, respectively, we get the required
results.
\end{proof}

 \begin{corollary}
\label{cor6} Let $A$ be a selfadjoint operator with
$\spe\left(A\right)\subset \left[\gamma,\Gamma\right]$ for some
real numbers $\gamma,\Gamma$ with $0<\gamma<\Gamma$. Let $\phi,\varphi:\mathcal{B}\left(\mathcal{H} \right)\to \mathcal{B}\left(\mathcal{K} \right)$ be two linear unital maps.
 If $f : \left[ {\gamma,\Gamma} \right]\to \mathbb{R}$ are
continuous and $t$-synchronous on $\left[ {\gamma,\Gamma}
\right]$, then
\begin{multline}
\left\langle {\phi\left(B^2\right) y,y} \right\rangle 
\cdot f^2\left( \left\langle {\varphi \left(A\right)x,x}
\right\rangle  \right)    
+\left\langle { \phi\left( f^2\left( B
	\right) \right)y,y } \right\rangle\cdot \left\langle {\varphi \left(A\right)x,x}
\right\rangle^2 
\\
\ge (\le)2 \left\langle {\phi\left(Bf\left( B \right) \right)y,y} \right\rangle  f\left( \left\langle {\varphi \left(A\right)x,x}
\right\rangle  \right)   \left\langle {\varphi \left(A\right)x,x}
\right\rangle 
\end{multline}
 for any $x\in H$ with $\|x\|=1$.
 \end{corollary}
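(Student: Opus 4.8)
The plan is to obtain this inequality as the special case $h(t)=t$ of the second-order estimate \eqref{eq2.17} proved in Corollary \ref{cor5} --- equivalently, as the case $f=g$, $h(t)=t$ of Theorem \ref{thm2.2}. Two preliminary observations make this legitimate. First, since $0<\gamma<\Gamma$, the weight $h(t)=t$ is positive (hence non-negative) and continuous on $[\gamma,\Gamma]$, so it is an admissible choice for $h$ in Theorem \ref{thm2.2} and therefore in \eqref{eq2.17}. Second, a function is always $t$-synchronous with itself, so the hypothesis that $f$ be $t$-synchronous is exactly what one needs to invoke \eqref{eq2.17} with $g=f$; here, as in the proof of Theorem \ref{thm2.2}, $B$ is understood to be selfadjoint with $\spe(B)\subseteq[\gamma,\Gamma]$, and $x,y$ range over unit vectors of $\mathcal{K}$.

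Next I would record the effect of the substitution $h(t)=t$. Through the continuous functional calculus for $B$ one has $h^2(B)=B^2$ and $h(B)f(B)=Bf(B)$. Also, since $\varphi$ is unital and order-preserving, $\gamma 1_{\mathcal{K}}\le\varphi(A)\le\Gamma 1_{\mathcal{K}}$, so the scalar $\left\langle\varphi(A)x,x\right\rangle$ lies in $[\gamma,\Gamma]$ and is evaluated by $h$ as $h\left(\left\langle\varphi(A)x,x\right\rangle\right)=\left\langle\varphi(A)x,x\right\rangle$ and $h^2\left(\left\langle\varphi(A)x,x\right\rangle\right)=\left\langle\varphi(A)x,x\right\rangle^2$. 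Inserting these four identities into \eqref{eq2.17} turns its left-hand side into $\left\langle\phi(B^2)y,y\right\rangle f^2\left(\left\langle\varphi(A)x,x\right\rangle\right)+\left\langle\phi(f^2(B))y,y\right\rangle\left\langle\varphi(A)x,x\right\rangle^2$ and its right-hand side into $2\left\langle\phi(Bf(B))y,y\right\rangle f\left(\left\langle\varphi(A)x,x\right\rangle\right)\left\langle\varphi(A)x,x\right\rangle$, which is exactly the asserted inequality; the $t$-asynchronous alternative yields the reversed inequality in the same manner.

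I do not expect a genuine obstacle, since the argument is only a choice of weight in an inequality that has already been established. The one point that warrants a line of care is the bookkeeping verification that $h(t)=t$ meets the standing hypotheses of Theorem \ref{thm2.2}: the assumption $0<\gamma<\Gamma$ serves precisely to make this power weight non-negative on $[\gamma,\Gamma]$, so that property \eqref{eq1.2} of the functional calculus applies to the operators built from $h$, $f$ and the constant $\left\langle\varphi(A)x,x\right\rangle$ exactly as in the proof of Theorem \ref{thm2.2}. Once this is in place, the corollary follows in one line: set $h(t)=t$ in \eqref{eq2.17}.
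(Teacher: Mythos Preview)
Your proposal is correct and follows exactly the paper's own approach: the proof in the paper consists of the single sentence ``Setting $h(t)=t$ in \eqref{eq2.17}, respectively, we get the required results.'' Your additional remarks on why $0<\gamma$ makes $h(t)=t$ admissible and on the functional-calculus substitutions are sound elaborations of that one-line derivation.
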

\begin{proof}
Setting $h(t)=t$ in \eqref{eq2.17}, respectively, we get the
required results.
\end{proof}

\begin{theorem}
\label{thm2.2-3} Let $A$ be a selfadjoint operator with
$\spe\left(A\right)\subset \left[\gamma,\Gamma\right]$ for some
real numbers $\gamma,\Gamma$ with $\gamma<\Gamma$. Let $\phi,\varphi:\mathcal{B}\left(\mathcal{H} \right)\to \mathcal{B}\left(\mathcal{K} \right)$ be two linear unital maps. Let $h: \left[
{\gamma,\Gamma} \right]\to \mathbb{R}_+$ be a positive function
on $\left[
{\gamma,\Gamma} \right]$.
 If $f,g: \left[ {\gamma,\Gamma} \right]\to \mathbb{R}_+$ are both positve, convex
  and  $h$-synchronous
  on $\left[ {\gamma,\Gamma} \right]$, then
\begin{multline}
h^2\left( \left\langle {\varphi(A)x,x} \right\rangle \right)
\left\langle {\phi(f\left(B\right))y,y} \right\rangle  \cdot  
\left\langle {\phi(g\left(B\right))y,y} \right\rangle   + h^2\left(\left\langle
{\phi(B)y,y} \right\rangle \right) \left\langle {\varphi(f\left(A\right))x,x}
\right\rangle \cdot   \left\langle {\varphi(g\left(A\right))x,x}
\right\rangle 
\\
\ge h\left( \left\langle {\varphi(A)x,x} \right\rangle \right)h\left(
\left\langle {\phi(B)y,y} \right\rangle \right) \left[f\left(
\left\langle {\phi({B})y,y} \right\rangle \right)  g\left( \left\langle
{\varphi(A)x,x} \right\rangle \right) + f\left( \left\langle {\varphi(A)x,x}
\right\rangle \right)  g\left( \left\langle {\phi(B)y,y} \right\rangle
\right)\right]
\label{eq2.10-3}
\end{multline}
for any $x,y\in H$ with $\|x\|=\|y\|=1$.
\end{theorem}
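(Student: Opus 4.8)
The plan is to reduce the operator inequality~\eqref{eq2.10-3} to a pointwise consequence of the $h$-synchronicity of $f$ and $g$, after first replacing the two ``averaged'' products on its left-hand side by products of scalar function values by means of a Jensen-type estimate. Throughout, $B$ denotes a selfadjoint operator with $\spe(B)\subseteq[\gamma,\Gamma]$, and I abbreviate $a:=\left\langle\varphi(A)x,x\right\rangle$ and $b:=\left\langle\phi(B)y,y\right\rangle$.

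First I would note that $a,b\in[\gamma,\Gamma]$: indeed $\gamma 1_{\mathcal H}\le A\le\Gamma 1_{\mathcal H}$ and the fact that $\varphi$ is a normalized positive linear map give $\gamma 1_{\mathcal K}\le\varphi(A)\le\Gamma 1_{\mathcal K}$, hence $\gamma\le a\le\Gamma$ for the unit vector $x$, and similarly $\gamma\le b\le\Gamma$. Next I would establish the scalar Jensen inequalities
\[
\left\langle\varphi(f(A))x,x\right\rangle\ge f(a),\qquad \left\langle\varphi(g(A))x,x\right\rangle\ge g(a),
\]
together with their analogues $\left\langle\phi(f(B))y,y\right\rangle\ge f(b)$ and $\left\langle\phi(g(B))y,y\right\rangle\ge g(b)$. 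These hold because $p\mapsto\left\langle\varphi(p(A))x,x\right\rangle$ is a positive linear functional on $C(\spe(A))$ which is normalized (taking $p\equiv1$ gives $\left\langle\varphi(1_{\mathcal H})x,x\right\rangle=1$), hence is represented by a Borel probability measure $\mu$ on $\spe(A)$; then $a=\int t\,d\mu(t)$ and $\left\langle\varphi(f(A))x,x\right\rangle=\int f(t)\,d\mu(t)$, so the convexity of $f$ and the classical Jensen inequality yield the claim, and likewise for $g$ and for $\phi,B,y$. Since $f,g\ge0$, multiplying these bounds pairwise gives
\[
\left\langle\varphi(f(A))x,x\right\rangle\left\langle\varphi(g(A))x,x\right\rangle\ge f(a)g(a),\qquad \left\langle\phi(f(B))y,y\right\rangle\left\langle\phi(g(B))y,y\right\rangle\ge f(b)g(b).
\]

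Since $h^2(a)\ge0$ and $h^2(b)\ge0$, these two estimates show that the left-hand side of~\eqref{eq2.10-3} is at least $h^2(a)f(b)g(b)+h^2(b)f(a)g(a)$. It then remains only to invoke the $h$-synchronicity~\eqref{h-syn} of $f$ and $g$ at the pair $(a,b)\in[\gamma,\Gamma]^2$: expanding the product $\bigl(h(b)f(a)-h(a)f(b)\bigr)\bigl(h(b)g(a)-h(a)g(b)\bigr)\ge0$ gives exactly
\[
h^2(b)f(a)g(a)+h^2(a)f(b)g(b)\ge h(a)h(b)\bigl[f(a)g(b)+f(b)g(a)\bigr],
\]
whose right-hand side is precisely the right-hand side of~\eqref{eq2.10-3}. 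Chaining the two lower bounds proves~\eqref{eq2.10-3}, and the $h$-asynchronous case follows by reversing the inequality in the last display.

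I expect the only step that requires genuine care to be the Jensen estimate: one must not appeal to operator convexity (which is not hypothesized), but only to the scalar Jensen inequality for the vector state $p\mapsto\left\langle\varphi(p(A))x,x\right\rangle$, which is legitimate precisely because, once this state is applied, the argument $a=\left\langle\varphi(A)x,x\right\rangle$ is an honest real number in $[\gamma,\Gamma]$. Everything else is elementary algebraic manipulation of the synchronicity identity.
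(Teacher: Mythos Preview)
Your proposal is correct and follows essentially the same strategy as the paper: first use the scalar Jensen inequality for the vector states to bound $\left\langle\varphi(f(A))x,x\right\rangle\ge f(a)$ etc., then invoke the $h$-synchronicity identity at the two scalar points $a=\left\langle\varphi(A)x,x\right\rangle$ and $b=\left\langle\phi(B)y,y\right\rangle$, and chain. Your write-up is in fact more careful than the paper's at the key Jensen step---you correctly identify that only scalar convexity is needed (via the Riesz representation of the positive normalized functional), not operator convexity, whereas the paper simply asserts the bound ``since $f$ and $g$ are convex.''
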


\begin{proof}
Since $f, g$ are $h$-synchronous and $\gamma \le  \left\langle {Ax,x}
\right\rangle \le \Gamma$, $\gamma \le  \left\langle {By,y}
\right\rangle \le \Gamma$ for any $x,y\in H$ with $\|x\|=\|y\|=1$,
we have
\begin{multline}
\left( {h\left( \left\langle {\varphi(A)x,x} \right\rangle \right)f\left(
\left\langle {\phi(A)y,y} \right\rangle \right) - h\left(\left\langle
{\phi(A)y,y} \right\rangle \right)f\left( \left\langle {\varphi(A)x,x}
\right\rangle \right)} \right)
\\
\times\left( {h\left( \left\langle {\varphi(A)x,x} \right\rangle
\right)g\left( \left\langle {\phi(A)y,y} \right\rangle  \right) -
h\left( \left\langle {\phi(A)y,y} \right\rangle  \right)g\left(
\left\langle {\varphi(A)x,x} \right\rangle \right)} \right) \ge 0
\label{eq2.12-3}
\end{multline}
for any $t \in \left[a,b\right]$ for any $x\in H$ with $\|x\|=1$.

Employing property \eqref{eq1.2} for inequality \eqref{eq2.12-3}
we have
\begin{multline}
 h^2\left( \left\langle {\varphi(A)x,x} \right\rangle \right)f\left(
\left\langle {\phi(B)y,y} \right\rangle \right)   g\left( \left\langle
{\phi(B)y,y} \right\rangle  \right)
\\
+  h^2\left(\left\langle {\varphi(B)y,y} \right\rangle \right)f\left(
\left\langle {\varphi(A)x,x} \right\rangle \right)  g\left( \left\langle
{\varphi(A)x,x} \right\rangle \right)
\\
- h\left( \left\langle {\varphi(A)x,x} \right\rangle \right)h\left(
\left\langle {\phi(B)y,y} \right\rangle \right)f\left( \left\langle
{\phi(B)y,y} \right\rangle \right)  g\left( \left\langle {\varphi(A)x,x}
\right\rangle \right)
\\
- h\left(\left\langle {\phi(B)y,y} \right\rangle \right) h\left(
\left\langle {\varphi(A)x,x} \right\rangle \right)f\left( \left\langle
{\varphi(A)x,x} \right\rangle \right)  g\left( \left\langle {\phi(B)y,y}
\right\rangle  \right) \ge 0 \label{eq2.13-3}
\end{multline}
for any  bounded linear operator $B$ with $\spe\left({B}\right)
\subseteq \left[\gamma,\Gamma\right]$ and $y\in H$ with $\|y\|=1$.

Now, since $f$ and $g$ are convex then we have
\begin{multline}
h^2\left( \left\langle {\varphi(A)x,x} \right\rangle \right)
\left\langle {\phi(f\left(B\right))y,y} \right\rangle  \cdot  
\left\langle {\phi(g\left(B\right))y,y} \right\rangle   + h^2\left(\left\langle
{\phi(B)y,y} \right\rangle \right) \left\langle {\varphi(f\left(A\right))x,x}
\right\rangle \cdot   \left\langle {\varphi(g\left(A\right))x,x}
\right\rangle 
\\
\ge h^2\left( \left\langle {\varphi(A)x,x} \right\rangle \right)f\left(
\left\langle {\phi(B)y,y} \right\rangle \right) \cdot  g\left(
\left\langle {\phi(B)y,y} \right\rangle  \right) + h^2\left(\left\langle
{\phi(B)y,y} \right\rangle \right)f\left( \left\langle {\varphi(A)x,x}
\right\rangle \right)\cdot  g\left( \left\langle {\varphi(A)x,x}
\right\rangle \right)
\\
\ge h\left( \left\langle {\varphi(A)x,x} \right\rangle \right)h\left(
\left\langle {\phi(B)y,y} \right\rangle \right) \left[f\left(
\left\langle {\phi({B})y,y} \right\rangle \right)  g\left( \left\langle
{\varphi(A)x,x} \right\rangle \right) + f\left( \left\langle {\varphi(A)x,x}
\right\rangle \right)  g\left( \left\langle {\phi(B)y,y} \right\rangle
\right)\right]  
\label{eq2.16-3}
\end{multline}
for each $x,y\in H$ with $\|x\|=\|y\|=1$. Setting $B=A^{-1}$ and
$y=x$ in \eqref{eq2.16-3} we get the required result in
\eqref{eq2.10-3}. The reverse sense follows similarly.
 \end{proof}


\begin{theorem}
	\label{thm2.2-3*} Let $A$ be a selfadjoint operator with
	$\spe\left(A\right)\subset \left[\gamma,\Gamma\right]$ for some
	real numbers $\gamma,\Gamma$ with $\gamma<\Gamma$. Let $\phi,\varphi:\mathcal{B}\left(\mathcal{H} \right)\to \mathcal{B}\left(\mathcal{K} \right)$ be two linear unital maps. Let $h: \left[
	{\gamma,\Gamma} \right]\to \mathbb{R}_+$ be a positive function
	on $\left[
	{\gamma,\Gamma} \right]$.
	If $f,g: \left[ {\gamma,\Gamma} \right]\to \mathbb{R}_+$ are both positve, convex
	and  $h$-synchronous
	on $\left[ {\gamma,\Gamma} \right]$, then
		\begin{align}
	&h^2\left( \left\langle {\varphi\left(A\right)x,x} \right\rangle \right)
	\left\langle {\phi\left( f\left(B\right)\right)y,y} \right\rangle  \cdot  
	\left\langle {\phi\left(g\left(B\right) \right)y,y} \right\rangle  
	\nonumber\\
	&\qquad+ h^2\left(\left\langle
	{\phi\left( B\right)y,y} \right\rangle \right) \left\langle {\varphi\left(f\left(A\right)\right)x,x}
	\right\rangle \cdot   \left\langle {\varphi\left( g\left(A\right)\right)x,x}
	\right\rangle 	\nonumber
	\\
	&\ge h^2\left( \left\langle {\varphi\left( A\right)x,x} \right\rangle \right)f\left(
	\left\langle {\phi\left( B\right)y,y} \right\rangle \right) \cdot  g\left(
	\left\langle {\phi\left( B\right)y,y} \right\rangle  \right) 
	\label{eq2.10-3*}\\
	&\qquad+ h^2\left(\left\langle
	{\phi\left(B \right)y,y} \right\rangle \right)f\left( \left\langle {\varphi\left(A \right)x,x}
	\right\rangle \right)\cdot  g\left( \left\langle {\varphi\left( A\right)x,x}
	\right\rangle \right)
	\nonumber\\
	&\ge h\left( \left\langle {\varphi\left( A\right)x,x} \right\rangle \right)h\left(
	\left\langle {\phi\left(B \right)y,y} \right\rangle \right) \times \left[f\left(
	\left\langle {\phi\left( B\right)y,y} \right\rangle \right)  g\left( \left\langle
	{\varphi\left( A\right)x,x} \right\rangle \right)\right.
	\nonumber\\
	&\qquad\qquad\qquad \left.	+ f\left( \left\langle {\varphi\left( A\right)x,x}
	\right\rangle \right)  g\left( \left\langle {\phi\left( B\right)y,y} \right\rangle
	\right)\right]  \nonumber
	\end{align}
	for any $x,y\in H$ with $\|x\|=\|y\|=1$.
\end{theorem}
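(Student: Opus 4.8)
The plan is to notice that \eqref{eq2.10-3*} is a purely scalar inequality once we freeze the two real numbers $u:=\langle \varphi(A)x,x\rangle$ and $v:=\langle \phi(B)y,y\rangle$ (here $B$ is a selfadjoint operator with $\spe(B)\subseteq[\gamma,\Gamma]$, as in the statement and proof of Theorem \ref{thm2.2-3}); indeed the asserted chain is exactly the intermediate estimate \eqref{eq2.16-3} that appears inside the proof of Theorem \ref{thm2.2-3}, now stated on its own. The first step is to record that $u,v\in[\gamma,\Gamma]$: since $\gamma 1_{\mathcal H}\le A\le\Gamma 1_{\mathcal H}$ and $\varphi$ is positive and unital, one has $\gamma 1_{\mathcal K}\le\varphi(A)\le\Gamma 1_{\mathcal K}$, whence $\gamma\le u\le\Gamma$ for every unit vector $x$; the same reasoning applied to $\phi$ and $B$ gives $\gamma\le v\le\Gamma$. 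Consequently $f$, $g$ and $h$ may be evaluated at $u$ and at $v$, with $h^2(u),h^2(v)\ge 0$ and $f(u),f(v),g(u),g(v)\ge 0$ by the positivity hypothesis on $f,g$.

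For the lower inequality in \eqref{eq2.10-3*} I would apply the defining relation \eqref{h-syn} of $h$-synchronicity to the pair of points $u,v\in[\gamma,\Gamma]$, namely
\begin{equation*}
\bigl(h(v)f(u)-h(u)f(v)\bigr)\bigl(h(v)g(u)-h(u)g(v)\bigr)\ge 0 ,
\end{equation*}
then expand the product and transpose the two cross terms to obtain
\begin{equation*}
h^2(u)f(v)g(v)+h^2(v)f(u)g(u)\ \ge\ h(u)h(v)\bigl[f(v)g(u)+f(u)g(v)\bigr] ,
\end{equation*}
which, on reinstating $u=\langle\varphi(A)x,x\rangle$ and $v=\langle\phi(B)y,y\rangle$, is precisely the second inequality of \eqref{eq2.10-3*}.

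For the upper inequality I would invoke Jensen's inequality for vector states together with the convexity of $f$ and $g$. The functional $\langle\phi(\cdot)y,y\rangle$ is a state on $\mathcal B(\mathcal H)$ (it is linear, positive, and unital because $\phi$ is), so its restriction to the commutative algebra $C^*(B)\cong C(\spe(B))$ is integration against a Borel probability measure $\mu$ on $\spe(B)\subseteq[\gamma,\Gamma]$; convexity of $f$ and the classical Jensen inequality then give $f(v)=f\left(\int t\,d\mu\right)\le\int f\,d\mu=\langle\phi(f(B))y,y\rangle$, and in the same way $g(v)\le\langle\phi(g(B))y,y\rangle$, $f(u)\le\langle\varphi(f(A))x,x\rangle$ and $g(u)\le\langle\varphi(g(A))x,x\rangle$. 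Since $f$ and $g$ are non-negative on $[\gamma,\Gamma]$, all the quantities compared above are non-negative, so multiplying the two estimates at $v$ and the two at $u$, scaling by the non-negative factors $h^2(u)$ and $h^2(v)$ respectively, and adding yields the first inequality of \eqref{eq2.10-3*}. Concatenating this with the inequality of the previous paragraph would yield \eqref{eq2.10-3*}.

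The only step that is not routine manipulation of non-negative reals is this last Jensen estimate: because $f$ is assumed merely convex and not operator convex, one cannot assert $f(\phi(B))\le\phi(f(B))$ at the operator level, and one must instead pass through the integral representation of the state $\langle\phi(\cdot)y,y\rangle$ on $C^*(B)$ so as to reduce to the scalar Jensen inequality. Everything else --- the expansion of a product of two non-negative factors, and the monotonicity of multiplication on $\mathbb R_+$ --- is elementary; one could in fact shortcut the entire argument by simply quoting the chain \eqref{eq2.16-3} already obtained in the proof of Theorem \ref{thm2.2-3}.
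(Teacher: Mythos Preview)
Your proof is correct and follows essentially the same route as the paper's own argument: both show $u:=\langle\varphi(A)x,x\rangle,\ v:=\langle\phi(B)y,y\rangle\in[\gamma,\Gamma]$ via unitality and positivity of the maps, obtain the second inequality by expanding the $h$-synchronicity relation at the scalar pair $(u,v)$, and obtain the first inequality from convexity of $f,g$ via $f(v)\le\langle\phi(f(B))y,y\rangle$, etc., multiplied and scaled by the non-negative factors $h^2(u),h^2(v)$. Your justification of the Jensen step through the Riesz representation of the state $\langle\phi(\,\cdot\,)y,y\rangle$ on $C^*(B)$ is more explicit than the paper, which simply asserts ``since $f$ and $g$ are positive convex functions'' and writes down the chain; but the underlying idea is the same Mond--Pe\v{c}ari\'{c}--type scalar Jensen inequality for unital positive maps.
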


\begin{proof}
ٍSince $\gamma \cdot 1_{H} \le A,B\le \Gamma \cdot 1_H$ then $\gamma \cdot 1_{K} \le \varphi \left(A\right)\le \Gamma \cdot 1_K$ and $\gamma \cdot1_K\le \phi \left(B\right)\le \Gamma \cdot 1_K$. So that for any $x,y\in H$ with $\|x\|=\|y\|=1$,
	we have $\gamma\le  \left\langle {\varphi\left(A\right) x,x}
	\right\rangle \le \Gamma$ and $\gamma \le  \left\langle {\phi\left(B\right) y,y}
	\right\rangle \le \Gamma$ 
	\begin{multline}
	\left( {h\left( \left\langle {\varphi\left(A\right)x,x} \right\rangle \right)f\left(
		\left\langle {\phi\left(B\right)y,y} \right\rangle \right) - h\left(\left\langle
		{\phi\left(B\right)y,y} \right\rangle \right)f\left( \left\langle {\varphi\left(A\right)x,x}
		\right\rangle \right)} \right)
	\\
	\times\left( {h\left( \left\langle {\varphi\left(A\right)x,x} \right\rangle
		\right)g\left( \left\langle {\phi\left(B\right)y,y} \right\rangle  \right) -
		h\left( \left\langle {\phi\left(B\right)y,y} \right\rangle  \right)g\left(
		\left\langle {\varphi\left(A\right)x,x} \right\rangle \right)} \right) \ge 0
	\label{eq2.12-3*}
	\end{multline}
	for any $t \in \left[a,b\right]$ for any $x\in H$ with $\|x\|=1$.
	
	Employing property \eqref{eq1.2} for inequality \eqref{eq2.12-3*}
	we have
	\begin{multline}
	h^2\left( \left\langle {\varphi\left(A\right)x,x} \right\rangle \right)f\left(
	\left\langle {\phi\left(B\right)y,y} \right\rangle \right)   g\left( \left\langle
	{\phi\left(B\right)y,y} \right\rangle  \right)
	\\
	+  h^2\left(\left\langle {\phi\left(B\right)y,y} \right\rangle \right)f\left(
	\left\langle {\varphi\left(A\right)x,x} \right\rangle \right)  g\left( \left\langle
	{\varphi\left(A\right)x,x} \right\rangle \right)
	\\
	- h\left( \left\langle {\varphi\left(A\right)x,x} \right\rangle \right)h\left(
	\left\langle {\phi\left(B\right)y,y} \right\rangle \right)f\left( \left\langle
	{\phi\left(B\right)y,y} \right\rangle \right)  g\left( \left\langle {\varphi\left(A\right)x,x}
	\right\rangle \right)
	\\
	- h\left(\left\langle {\phi\left(B\right)y,y} \right\rangle \right) h\left(
	\left\langle {\varphi\left(A\right)x,x} \right\rangle \right)f\left( \left\langle
	{\varphi\left(A\right)x,x} \right\rangle \right)  g\left( \left\langle {\phi\left(B\right)y,y}
	\right\rangle  \right) \ge 0. \label{eq2.13-3*}
	\end{multline}
	Now, since $f$ and $g$ are postive convex functions then we have
	\begin{align*}
	&h^2\left( \left\langle {\varphi\left(A\right)x,x} \right\rangle \right)
	\left\langle {\phi\left( f\left(B\right)\right)y,y} \right\rangle  \cdot  
	\left\langle {\phi\left(g\left(B\right) \right)y,y} \right\rangle  
	\nonumber\\
	 &\qquad+ h^2\left(\left\langle
	{\phi\left( B\right)y,y} \right\rangle \right) \left\langle {\varphi\left(f\left(A\right)\right)x,x}
	\right\rangle \cdot   \left\langle {\varphi\left( g\left(A\right)\right)x,x}
	\right\rangle 	\nonumber
	\\
	&\ge h^2\left( \left\langle {\varphi\left( A\right)x,x} \right\rangle \right)f\left(
	\left\langle {\phi\left( B\right)y,y} \right\rangle \right) \cdot  g\left(
	\left\langle {\phi\left( B\right)y,y} \right\rangle  \right) 
\\
	&\qquad+ h^2\left(\left\langle
	{\phi\left(B \right)y,y} \right\rangle \right)f\left( \left\langle {\varphi\left(A \right)x,x}
	\right\rangle \right)\cdot  g\left( \left\langle {\varphi\left( A\right)x,x}
	\right\rangle \right)
	\nonumber\\
	&\ge h\left( \left\langle {\varphi\left( A\right)x,x} \right\rangle \right)h\left(
	\left\langle {\phi\left(B \right)y,y} \right\rangle \right) \times \left[f\left(
	\left\langle {\phi\left( B\right)y,y} \right\rangle \right)  g\left( \left\langle
	{\varphi\left( A\right)x,x} \right\rangle \right)\right.
	\nonumber\\
	&\qquad\qquad\qquad \left.	+ f\left( \left\langle {\varphi\left( A\right)x,x}
	\right\rangle \right)  g\left( \left\langle {\phi\left( B\right)y,y} \right\rangle
	\right)\right]  \nonumber
	\end{align*}
	for each $x,y\in H$ with $\|x\|=\|y\|=1$, which proves  the required result in
	\eqref{eq2.10-3*}. The reverse sense follows similarly.
\end{proof}



\bibliographystyle{amsplain}

\begin{thebibliography}{99}
\bibitem{MA} M.W. Alomari, \textit{On Pompeiu--Chebyshev functional and its generalization},  Preprint (2017). Availiable at
arXiv:1706.06250v2

\bibitem{MA1} M.W. Alomari, \textit{Pompeiu--\v{C}eby\v{s}ev type
inequalities for selfadjoint operators in Hilbert spaces},   
	Adv. Oper. Theory, {\bf{3}} no. 3 (2018), 9--22.

\bibitem{SD1} S.S. Dragomir, \textit{\v{C}eby\v{s}ev's type inequalities for functions
of selfadjoint operators in Hilbert spaces}, Linear and
Multilinear Algebra, {\textbf 58} no 7--8  (2010), 805--814.

\bibitem{SD2}S.S. Dragomir, \textit{Operator inequalities
of the Jensen, \v{C}eby\v{s}ev and Gr\"{u}ss type}, Springer, New
York,  2012.

\bibitem{TF} T. Furuta, J. Mi\'{c}i\'{c}, J. Pe\v{c}ari\'{c} and Y. Seo, \textit{Mond-Pe\v{c}ari\'{c}
method in operator inequalities. Inequalities for bounded
selfadjoint operators on a Hilbert space}, Element, Zagreb, 2005.

\bibitem{MB}M.S. Moslehian and M. Bakherad,  \textit{Chebyshev type inequalities for
Hilbert space operators}, J. Math. Anal. Appl. {\textbf 420}
(2014), no. 1, 737--749.

\bibitem{MM}J.S. Matharu and M.S. Moslehian, \textit{Gr\"{u}ss inequality for some
types of positive linear maps}, J. Operator Theory {\textbf 73}
(2015), no. 1, 265--278.

\end{thebibliography}

\end{document}